\documentclass[11pt,reqno]{amsart}
\usepackage{amssymb,latexsym,amsmath,epsfig,amsthm,mathrsfs}
\usepackage{rotating}
\usepackage{graphicx}
\usepackage{amssymb}
\usepackage{lineno}
\usepackage{enumitem}
\usepackage{cite}
\usepackage{multicol}
\usepackage[top=3cm, bottom=3cm, left=2.5cm, right=2.5cm]{geometry}
\usepackage[usenames]{color}
\usepackage{float}
\usepackage[colorlinks=true,
linkcolor=blue,
filecolor=blue,
citecolor=red]{hyperref}
\numberwithin{equation}{section}

\newtheorem{question}{Question}[section]

\newtheorem{corollary}{Corollary}[section]

\newtheorem{theorem}{Theorem}[section]
\newtheorem{lemma}[theorem]{Lemma}
\newtheorem{definition}[theorem]{Definition}
\newtheorem{remark}[theorem]{Remark}
\usepackage{thmtools, thm-restate}
\newtheorem{conjecture}[theorem]{Conjecture}

\begin{document}
\title[Exact and Asymptotic Counts of MSTD, MDTS, and Balanced Sets in Dicyclic Groups]{Exact and Asymptotic Counts of MSTD, MDTS, and Balanced Sets in Dicyclic Groups}

\author[Sagar Mandal]{Sagar Mandal}
\address{Department of Mathematics\\Indian Institute of Technology Ropar\\Punjab, India.}
\email{sagar.25maz0008@iitrpr.ac.in, sagarmandal31415@gmail.com}
 
\author[Neetu]{Neetu$^{*}$}
	\address{Department of Mathematical and Computational Sciences, National Institute of Technology Surathkal, Karnataka, 575025, India}
	\email{chananianeetu@gmail.com}
    \thanks{$^{*}$The corresponding author}
    \subjclass[2020]{11A07, 11B75, 11B99, 11P70}

	\keywords{Balanced sets, Combinatorics, Dicyclic Groups, MDTS sets, MSTD sets.}

\begin{abstract}
We investigate the relationship between the sizes of the sum and difference sets of the Dicyclic Group $\mathrm{Dic}_{4n}$. We first determine the exact numbers of MSTD (more sums than differences), MDTS (more differences than sums), and balanced subsets of size two. As a consequence, we show that the numbers of MSTD and balanced subsets of size two are asymptotically equal as $n \to \infty$. For odd $n$, we then obtain exact counts of MSTD, MDTS, and balanced subsets of size three, with the results depending on whether $n$ is divisible by $3$. In this case, we establish that asymptotically the number of MSTD subsets of size three is six times the number of MDTS subsets and also six times the number of balanced subsets. Finally, we establish a lower bound for the number of MSTD, MDTS, and balanced subsets of $\mathrm{Dic}_{4n}$ corresponding to the boundary case of size $2n$. 
\end{abstract}
\maketitle
\tableofcontents
\section{Introduction}
 If $A\subset \mathbb{Z}$, where $\mathbb{Z}$ denotes the group of integers,
 the sumset and difference set of $A$ are defined as follows.
 $$A+A:=\{x\in \mathbb{Z}: x=a_1+a_2  \text{ for some }  a_1,a_2 \in A\},$$
 $$A-A:=\{x\in \mathbb{Z}: x=a_1-a_2 \text{ for some } a_1,a_2 \in A\}.$$
 These elementary operations are fundamental in additive number theory. For integers $a$ and $b$, we define $[a,b] = \{x\in \mathbb{Z}: a\leq x \leq b\}$. A natural problem of
recent interest has been to understand the relative sizes of the sum and difference sets of a set  $A$.
\begin{definition}
If $|A+A| > |A-A|$, $A$ is called a  {\emph{more-sums-than-differences (MSTD)}} set or a {\emph{sum-dominant}}  set , while if $|A+A| = |A-A| $,  $A$ is said to be   {\emph{balanced}}, and if
$|A+A| < |A-A|$, then $A$ is called a {\emph{more-differences-than-sums (MDTS)}}
set or a  {\emph{difference-dominant}} set.
\end{definition}
Since addition is commutative and subtraction is not, we usually expect most sets to be MDTS. However, MSTD sets do exist. It is believed that Conway in 1969  gave the first example of an MSTD set, which is $\{0, 2, 3, 4, 7, 11, 12, 14\}$.  Martin and  O'Bryant \cite{Martin_Bryant_2006} proved that for all $n\geq 15$, a positive proportion of the $2^{n}$ subsets of $\{0, 1, \ldots, n-1\}$ are MSTD. Zhao \cite{Zhao_2011} later gave a deterministic algorithm to compute the proportion of MSTD
sets as $n$ goes to infinity and proved that this proportion is at least $4.28\cdot10^{-4}$. 

The last few years have seen an explosion of papers examining the properties of sum-dominant sets of integers, one can see (\cite{Hegarty_2007, Hegarty_Miller_2009, Chu_2022, Kim_Miller_2022, Miller_Scheinerman_2010, Miller_Peterson_2019, Asada_2017, Lazarev_Miller_OBryant_2013, Iyer_Lazarev_Miller_Zhang_2012, Martin_Bryant_2006,  Do_Kulkarni_Miller_Moon_Wellens_Wilcox_2015, Zhao_2010a}).\par
Much of the study of sum-dominant sets has concerned subsets of the
integers; however, the phenomenon in finite  groups has received
some attention, notably from  \cite{ Hegarty_2007}, \cite{Nathanson_2007},   \cite{Zhao_2010} and \cite{Penman_2014}. One key motivation for this line of research is the close connection between MSTD sets in finite abelian groups and those in the integers. Nathanson in \cite{Nathanson_2007} showed that families of MSTD sets of integers can be constructed from MSTD sets in finite abelian groups. More precisely, if
 $A\subset \mathbb{Z}/n\mathbb{Z}$ satisfies $|A+A|>|A-A|$, then $\{a\in\mathbb{Z} :a \text{ mod } n\in A,0\leq a\leq kn\}$ is an MSTD set of integers for sufficiently large $k$. \par
 In this direction, Nathanson in \cite{Nathanson_2007} established lower bounds on the number of MSTD subsets in certain finite abelian groups, showing that
 \begin{equation*}  
 |\text{MSTD}(\mathbb{Z}/n\mathbb{Z}\times\mathbb{Z}/2\mathbb{Z})|\geq
  \begin{cases}
   2^n(1-\frac{2n}{2^{n/2}}),  & \text{if n is even}, \\
       2^n(1-\frac{\sqrt{2}n}{2^{n/2}}),  & \text{if n is odd}.
      \end{cases}
    \end{equation*} 
Zhao in \cite{Zhao_2010} improved and generalized Nathanson’s result. He gave asymptotics for
$|\mathrm{MSTD}(G)|$ for large finite abelian groups $G$. In particular, he showed that $$
|\mathrm{MSTD}(\mathbb{Z}/n\mathbb{Z})| \sim
\begin{cases}
3^{n/2}, & \text{if $n$ is even}, \\
\frac{1}{2}\, n^{\varphi ^{n}}, & \text{if $n$ is odd}.
\end{cases}
$$ and $$
|\mathrm{MSTD}(\mathbb{Z}/n\mathbb{Z} \times \mathbb{Z}/2\mathbb{Z})| \sim
\begin{cases}
3^{n+1}, & \text{if $n$ is even}, \\
3^{n}, & \text{if $n$ is odd}.
\end{cases}
$$
In the context of non-abelian groups, Miller and Vissuet in  \cite{Miller_Vissuet_2014} began a systematic investigation of sumsets and difference sets, started with dihedral group  
$$D_{2n} := \langle r, s : r^n = s^2 = 1,\; r^{-1}s = sr \rangle.$$

Based on both theoretical and computational evidence, they proposed the following conjecture.
\begin{conjecture}\textup{\cite{Miller_Vissuet_2014}}\label{millerconjecture} Let $n\geq3 $ be an integer. There are more MSTD subsets of $D_{2n}$ than MDTS subsets of $D_{2n}.$\end{conjecture} 
Further progress toward this conjecture was made by Ascoli et al. in \cite{Ascoli et al_2022}, who studied subsets of $D_{2n}$ according to their cardinality. 

 In the finite groups, although the operation is usually written multiplicatively, we follow the notation from  \cite{Miller_Vissuet_2014} and define the sumset and difference set as  $$A+A=\{a_1a_2: a_1,a_2\in A\},$$ and $$A-A =\{a_1a_2^{-1}: a_1,a_2 \in A\}.$$  
 In this paper we focus on dicyclic groups. For a positive integer $n\geq 1$, the dicyclic group, denoted as $\mathrm{Dic}_{4n}$, is defined as $$\mathrm{Dic}_{4n}= \langle a,b : a^{2n}=1, a^{n}=b^2, b^{-1}ab=a^{-1}\rangle.$$
 The following properties of the dicyclic group $\mathrm{Dic}_{4n}$ can be derived from the relations $a^{2n}=1$, $a^{n}=b^2$, and $ab=ba^{-1}$, as shown in \cite{Cheng_Feng_Huang_2019}.
\begin{lemma}\textup{\cite[Lemma 2.6]{Cheng_Feng_Huang_2019}}
    For the dicyclic group $\mathrm{Dic}_{4n}$, we have 
    \begin{enumerate}
        \item[\upshape(1)] $ba^{k}=a^{-k}b, a^{k}b=ba^{-k};$
        \item[\upshape(2)] $a^{k}ba^{m}b=a^{k-m+n};$
        \item[\upshape(3)] $(ba^{k})^{-1}=ba^{n+k}, (a^{k}b)^{-1}=a^{n+k}b$,
    \end{enumerate} where $k\in [0,2n-1]$.
\end{lemma}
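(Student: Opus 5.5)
The plan is to derive all three identities directly from the defining relations $a^{2n}=1$, $a^n=b^2$ and $b^{-1}ab=a^{-1}$. Two consequences are needed first. Since $b^2=a^n$ is a power of $a$, it commutes with $a$, and it is central in the group. Also $a^n$ is its own inverse, because $a^{2n}=1$; this will let us replace $a^{-n}$ by $a^{n}$ whenever convenient.

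For part (1), rewrite $b^{-1}ab=a^{-1}$ as $ab=ba^{-1}$. Conjugation by $b$ is an automorphism, so $b^{-1}a^kb=(b^{-1}ab)^k=a^{-k}$ for every integer $k$, which is exactly $a^kb=ba^{-k}$. Replacing $k$ by $-k$ gives $a^{-k}b=ba^{k}$, which is the other identity $ba^k=a^{-k}b$. This could also be done by induction on $k\ge 0$, but the conjugation argument covers all $k\in[0,2n-1]$ at once and avoids any trouble with negative exponents.

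For part (2), apply (1) to move $b$ past $a^m$: $ba^mb=a^{-m}bb=a^{-m}b^2=a^{-m}a^n$. Multiplying on the left by $a^k$ gives $a^kba^mb=a^{k-m+n}$, with exponents read modulo $2n$.

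For part (3), check directly that the claimed inverse works. Using (2) with $k=0$, $(ba^k)(ba^{n+k})=b a^k b a^{n+k}$. Moving $a^k$ past $b$ by (1) turns this into $a^{-k}b^2a^{n+k}=a^{-k}a^na^{n+k}=a^{2n}=1$. In a group a one-sided inverse is two-sided, so $(ba^k)^{-1}=ba^{n+k}$. In the same way, $(a^kb)(a^{n+k}b)=a^kba^{n+k}b=a^{k-(n+k)+n}=1$ by (2), so $(a^kb)^{-1}=a^{n+k}b$.

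None of these steps is a real obstacle. The only points needing care are justifying (1) for all exponents, not just $k=1$, which the conjugation-automorphism argument settles, and remembering that every exponent is taken modulo $2n$, which makes $a^{-n}=a^n$.
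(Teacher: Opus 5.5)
Your proposal is correct and takes the same route the paper indicates: the paper gives no argument of its own, only citing Cheng--Feng--Huang and noting that the identities follow from the relations $a^{2n}=1$, $a^n=b^2$, $ab=ba^{-1}$, and you carry out exactly that direct verification. One small point: in part (3), the phrase ``using (2) with $k=0$'' plays no role in the computation you then write out, although applying (2) in that form, $ba^kb=a^{n-k}$, would give $ba^kba^{n+k}=a^{2n}=1$ at once.
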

Consequently, the group $\mathrm{Dic}_{4n}$ admits the following explicit representation:
 $$\mathrm{Dic}_{4n}=\{1, a,a^{2},\ldots, a^{2n-1}, b, ab, a^2b, \ldots, a^{2n-1}b \}.$$ 
 We now compare  MSTD and MDTS sets in the dihedral groups $D_{2n}$ and the dicyclic groups $\mathrm{Dic}_{4n}$ for small values of $n$.
When $n=1$, both groups are isomorphic to $\mathbb{Z}/4\mathbb{Z}$ and all subsets of $\mathbb{Z}/4\mathbb{Z}$ are balanced. For larger values of $n$, the structures begin to diverge, and interesting behaviors emerge. We present below the number of MSTD and MDTS sets of size $m$ in $D_{2n}$ and $\mathrm{Dic}_{4n}$, for $2\leq n\leq 5$ and $2\leq m\leq 10$.

\begin{table}[h]
\centering
\scriptsize
\caption{Number of MSTD sets in  $D_{2n}$ and  $\mathrm{Dic}_{4n}$.}\label{table 1}
\begin{tabular}{ |p{0.8cm}||*{10}{p{0.9cm}|} }
 \hline
 Size & $D_8$ & $\mathrm{Dic}_8$ & $D_{12}$ & $\mathrm{Dic}_{12}$ & $D_{16}$ & $\mathrm{Dic}_{16}$ & $D_{20}$& $\mathrm{Dic}_{20}$  \\
 \hline
2 & 8 & 0 & 24 & 24 & 48 & 32 & 80 & 80  \\
3 & 24 & 0 & 132 & 24 & 352 & 96 & 760 & 400  \\
4 & 32 & 16 & 246 & 126 & 1208 & 392 & 3380 & 1380  \\
5 & 0 & 0 & 456 & 120 & 2896 & 752 & 11440 & 3840  \\
6 & 0 & 0 & 276 & 60 & 5304 & 1368 & 27200 & 9520  \\
7 & 0 & 0 & 0 & 0 & 5504 & 992 & 51400 & 14480 \\
8 & 0 & 0 & 0 & 0 & 1888 & 288 & 69320 & 11920  \\
9 & 0 & 0 & 0 & 0 & 0 & 0 & 43200 & 5360 \\
10 & 0 & 0 & 0 & 0 & 0 & 0 & 9960 & 1080 \\

 \hline
\end{tabular}
\end{table}
\begin{table}[h]
\centering
\scriptsize
\caption{Number of MDTS sets in  $D_{2n}$ and  $\mathrm{Dic}_{4n}$.}\label{table 2}
\begin{tabular}{ |p{0.8cm}||*{10}{p{0.9cm}|} }
 \hline
 Size & $D_8$ & $\mathrm{Dic}_8$ & $D_{12}$ & $\mathrm{Dic}_{12}$ & $D_{16}$ & $\mathrm{Dic}_{16}$ & $D_{20}$ & $\mathrm{Dic}_{20}$  \\
 \hline
2 & 0 & 0 & 0 & 0 & 0 & 0 & 0 & 0  \\
3 & 0 & 24 & 12 & 48 & 32 & 160 & 80 & 200 \\
4 & 8 & 24 & 0 & 228 & 64 & 832 & 120 & 2080  \\
5 & 0 & 0 & 72 & 264 & 128 & 2016 & 680 & 7160 \\
6 & 0 & 0 & 48 & 48 & 784 & 3344 & 2280 & 19920  \\
7 & 0 & 0 & 0 & 0 & 640 & 1888 & 5600 & 31760  \\
8 & 0 & 0 & 0 & 0 & 104 & 296 & 10160 & 23520  \\
9 & 0 & 0 & 0 & 0 & 0 & 0 & 5560 & 7080  \\
10 & 0 & 0 & 0 & 0 & 0 & 0 & 960 & 480  \\

  \hline
\end{tabular}
\end{table} 
From these tables, it is clear that dihedral groups always contain more MSTD sets than MDTS sets, which supports Conjecture \ref{millerconjecture}. In fact, Ascoli et al. \cite{Ascoli et al_2022} proved that, for subsets of sizes $2$ and $3$, the dihedral group $D_{2n}$ contains strictly more MSTD sets than MDTS sets. However, this does not hold for dicyclic groups, which display different behavior.\par
For $m=2$, we can easily observe from  Table \ref{table 1} and Table \ref{table 2} that, dicyclic groups also contain strictly more MSTD sets than MDTS sets. In contrast, for $m=3$, we observe that when $2 \leq n \leq 4$, dicyclic groups have more MDTS sets than MSTD sets, whereas for $n=5$ the situation reverses and MSTD sets become more numerous. 
 
The aim of this paper is to investigate sumsets, difference sets, and balanced sets in dicyclic groups. Before stating our main results, we introduce some notation. 
For $k \in \mathbb{N}$, we denote by
$$
\mathrm{S}_{\mathrm{Dic}_{4n}}(k), \quad \mathrm{D}_{\mathrm{Dic}_{4n}}(k), \quad \mathrm{B}_{\mathrm{Dic}_{4n}}(k)
$$
the number of MSTD, MDTS, and balanced subsets of $\mathrm{Dic}_{4n}$ of cardinality $k$, respectively.

For a fixed subset $A \subseteq \mathrm{Dic}_{4n}$ \textit{representing a given type}, we write
$$
\mathrm{S}_{\mathrm{Dic}_{4n}}(k,A), \quad \mathrm{D}_{\mathrm{Dic}_{4n}}(k,A), \quad \mathrm{B}_{\mathrm{Dic}_{4n}}(k,A)
$$
for the corresponding counts restricted to \textit{subsets of type $A$}.

Moreover, for $0 \leq r \leq n$, we define the subsets
\begin{align*}
\mathcal{R}_{2r} = \{a^{2r}, a^{2r+1}, \dots, a^{2r+2n-1}\}, \quad \mathcal{F}_{2r} = \{a^{2r}b, a^{2r+1}b, \dots, a^{2r+2n-1}b\},
\end{align*}
where exponents are taken modulo $2n$. Note that $\mathrm{Dic}_{4n} = \mathcal{R}_{2r} \cup \mathcal{F}_{2r}$.

Our main results in this direction are stated as follows.
\begin{theorem}\label{Theorem1.0}
Let $n\geq2$ be an integer. Then the following hold.
\begin{enumerate}
    \item[\upshape(1)] If $n$ is even, then
    $$
    \mathrm{S}_{\mathrm{Dic}_{4n}}(2)=4n(n-2), \qquad
    \mathrm{D}_{\mathrm{Dic}_{4n}}(2)=0, \qquad
    \mathrm{B}_{\mathrm{Dic}_{4n}}(2)=2n(2n+3).
    $$
    \item[\upshape(2)] If $n$ is odd, then
    $$
    \mathrm{S}_{\mathrm{Dic}_{4n}}(2)=4n(n-1), \qquad
    \mathrm{D}_{\mathrm{Dic}_{4n}}(2)=0, \qquad
    \mathrm{B}_{\mathrm{Dic}_{4n}}(2)=2n(2n+1).
    $$
\end{enumerate}
\end{theorem}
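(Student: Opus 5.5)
The plan is a direct case analysis. Every $2$-element subset of $\mathrm{Dic}_{4n}$ is of one of three types: $\{a^i,a^j\}$ with $i\neq j$, $\{a^ib,a^jb\}$ with $i\neq j$, or $\{a^i,a^jb\}$. There are $\binom{2n}{2}$, $\binom{2n}{2}$ and $4n^2$ subsets of these types respectively. For each type I will compute $A+A$ and $A-A$ explicitly using the Lemma of \cite{Cheng_Feng_Huang_2019}, in the form $a^kba^mb=a^{k-m+n}$, $ba^k=a^{-k}b$ and $(a^jb)^{-1}=a^{n+j}b$. All exponents are read modulo $2n$.

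\textbf{Type $\{a^i,a^j\}$.} These subsets lie in the cyclic subgroup $\langle a\rangle\cong\mathbb{Z}/2n\mathbb{Z}$. Here $A+A=\{a^{2i},a^{i+j},a^{2j}\}$ and $A-A=\{1,a^{i-j},a^{j-i}\}$.
\begin{itemize}
\item Both sets have size $3$, except when $2(i-j)\equiv 0$, i.e.\ $j=i+n$.
\item In that case both collapse to size $2$.
\end{itemize}
So every such set is balanced.

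\textbf{Type $\{a^ib,a^jb\}$.} Put $d=i-j\neq 0$. Then $A+A=\{a^n,a^{n+d},a^{n-d}\}$, since $(a^ib)^2=(a^jb)^2=a^n$. Also
$$A-A=\{1,\ a^ib\,a^{n+j}b,\ a^jb\,a^{n+i}b\}=\{1,a^{d},a^{-d}\}.$$
Again both sets have size $3$ unless $d=n$, where both have size $2$. Hence all of these are balanced as well.

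\textbf{Mixed type $\{a^i,a^jb\}$.} This is the case that actually produces MSTD sets. The sums are
$$A+A=\{a^{2i},\ a^n,\ a^{i+j}b,\ a^{j-i}b\}.$$
The differences are
$$A-A=\{1,\ a^i(a^jb)^{-1},\ a^jb\,a^{-i}\}=\{1,\ a^{i+j+n}b,\ a^{i+j}b\}.$$
So $|A-A|=3$ always. For $A+A$, the two rotations coincide iff $2i\equiv n$, and the two reflections coincide iff $2i\equiv 0 \pmod{2n}$. These conditions cannot hold simultaneously, so $|A+A|\in\{3,4\}$. Therefore:
\begin{itemize}
\item no set of this type is MDTS;
\item the set is balanced exactly when $i\in\{0,n\}$, or when $n$ is even and $i\in\{n/2,3n/2\}$;
\item otherwise it is MSTD.
\end{itemize}

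\textbf{Counting.} In the mixed type, $j$ is free ($2n$ choices).
\begin{itemize}
\item For $n$ even there are $4$ balanced values of $i$. This gives $8n$ balanced sets and $2n(2n-4)=4n(n-2)$ MSTD sets.
\item For $n$ odd there are $2$ balanced values of $i$. This gives $4n$ balanced sets and $2n(2n-2)=4n(n-1)$ MSTD sets.
\end{itemize}
Adding the $2\binom{2n}{2}=2n(2n-1)$ balanced sets from the first two types gives $\mathrm{B}_{\mathrm{Dic}_{4n}}(2)=2n(2n+3)$ for $n$ even and $2n(2n+1)$ for $n$ odd. We also get $\mathrm{D}_{\mathrm{Dic}_{4n}}(2)=0$ in both cases.

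The only delicate step is the mixed-type computation. There one must correctly evaluate the four products $a^ia^i$, $a^ia^jb$, $a^jba^i$, $a^jba^jb$ and the inverses, and check that the two possible coincidences in $A+A$ are mutually exclusive. The rest is bookkeeping.
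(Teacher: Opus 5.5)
Your proposal is correct and follows the paper's proof essentially step for step. You use the same three-type decomposition, the same explicit sum and difference sets, and the same collision conditions $2i\equiv n$ and $2i\equiv 0 \pmod{2n}$ in the mixed case; your explicit observations that $|A-A|=3$ always and that the two coincidences in $A+A$ cannot occur together simply make precise what the paper leaves implicit.
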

\begin{remark}
From Theorem \ref{Theorem1.0}, we obtain, as $n \to \infty$,
$$
  \begin{aligned}
    \mathrm{S}_{\mathrm{Dic}_{4n}}(2) \sim \mathrm{B}_{\mathrm{Dic}_{4n}}(2).
 \end{aligned}$$
 In particular, for sufficiently large $n$, the number of MSTD sets is asymptotically equal to the number of balanced sets.
\end{remark}
\begin{theorem}\label{Theorem1.1}
Let $n\geq3$ be an odd integer. Then the following hold. 
\begin{enumerate}
    \item[\upshape(1)] If $3 \mid n$, then
    $$
    \begin{aligned}
    \mathrm{S}_{\mathrm{Dic}_{4n}}(3) &= 4n(n-1)(2n-5),\\
    \mathrm{D}_{\mathrm{Dic}_{4n}}(3) &= \frac{2n(2n^2+3n-3)}{3},\\
    \mathrm{B}_{\mathrm{Dic}_{4n}}(3) &= \frac{2n(2n^2+27n-25)}{3}.
    \end{aligned}
    $$
    \item[\upshape(2)] If $3 \nmid n$, then
    $$
    \begin{aligned}
    \mathrm{S}_{\mathrm{Dic}_{4n}}(3) &= 4n(n-1)(2n-5),\\
    \mathrm{D}_{\mathrm{Dic}_{4n}}(3) &= \frac{2n(n-1)(2n+5)}{3},\\
    \mathrm{B}_{\mathrm{Dic}_{4n}}(3) &= \frac{2n(2n^2+27n-23)}{3}.
    \end{aligned}
    $$
\end{enumerate}
\end{theorem}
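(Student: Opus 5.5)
We classify 3-subsets $A\subseteq \mathrm{Dic}_{4n}$ by how many elements lie in the cyclic part $\mathcal{R}_0=\langle a\rangle$ and how many in the coset $\mathcal{F}_0=\langle a\rangle b$. This gives four types: $\{a^i,a^j,a^k\}$, $\{a^i,a^j,a^kb\}$, $\{a^i,a^jb,a^kb\}$ and $\{a^ib,a^jb,a^kb\}$. For each type we compute $|A+A|$ and $|A-A|$ with the Lemma of Cheng--Feng--Huang, using $a^kb\,a^mb=a^{k-m+n}$, $a^ib=ba^{-i}$ and $(a^kb)^{-1}=a^{n+k}b$. Then
\[
\mathrm{S}_{\mathrm{Dic}_{4n}}(3)=\sum_{A}\mathrm{S}_{\mathrm{Dic}_{4n}}(3,A),
\]
and similarly for $\mathrm{D}$ and $\mathrm{B}$. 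As a consistency check, the three totals must add up to $\binom{4n}{3}$.

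To reduce the number of cases we use symmetry. Left multiplication by $a^t$ preserves both $|A+A|$ and $|A-A|$ on the first and last types (and conjugation also works in general), so we can normalize one exponent to $0$ and write the others as differences $x=j-i$, $y=k-i$ modulo $2n$.

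\textbf{Purely rotational type.} This lives in the abelian group $\mathbb{Z}/2n$. Here $A-A=\{0,\pm x,\pm y,\pm(x-y)\}$ and $A+A=\{0,x,y,2x,2y,x+y\}$ after translation. The sizes depend only on coincidences such as $2x\equiv 0$, $x\equiv -y$, $2x\equiv y$, $3x\equiv 0$ or $x\equiv n$. The condition $3\mid n$ enters exactly through arithmetic progressions $\{0,x,2x\}$ with $3x\equiv0 \pmod{2n}$. This is the source of the different $\mathrm{D}$ and $\mathrm{B}$ formulas in the two cases, and we expect these degenerate sets to contribute a term of size $O(n)$ that moves between the MDTS and balanced counts.

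\textbf{Types with $b$-elements.} Products of the form $a^ib\cdot a^jb=a^{i-j+n}$ make differences behave like sums and vice versa.
\begin{itemize}
\item For $\{a^i,a^j,a^kb\}$: the sumset consists of $a^{2i},a^{i+j},a^{2j}$, $a^{i+k}b,a^{j+k}b,a^{k-i}b,a^{k-j}b$ and $a^n$. The difference set consists of $1,a^{\pm(i-j)}$ and the elements $a^{k\pm\cdot}b$ coming from the inverses.
\item For the other mixed type $\{a^i,a^jb,a^kb\}$ and for the all-$b$ type, write out the sumset and difference set explicitly in the same way.
\end{itemize}
In each case the sizes are a generic value minus the number of coincidences. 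Because $n$ is odd, $a^n$ is not a square in $\langle a\rangle$ in the relevant way and $2x\equiv n$ has no solution, which removes several degenerate cases; this is presumably why the theorem assumes $n$ odd. We expect the generic set of the one-$b$ and two-$b$ types to be MSTD. Counting them, after subtracting degenerate configurations, should produce the leading term $4n(n-1)(2n-5)$.

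\textbf{Main obstacle.} The hardest part is the bookkeeping of coincidence conditions in the mixed types. We have to list every pair of formally distinct products that can coincide, determine when (for odd $n$) each coincidence occurs, count the solutions with inclusion--exclusion on the parameters $(x,y)$, and correct for overcounting when a set is counted under several normalizations (ordered versus unordered exponents, and which element is translated to $0$). We would carry this out type by type. To validate the resulting formulas we would check them against Tables \ref{table 1}--\ref{table 2} at $n=3,5$; for example, at $n=5$ the formula gives $\mathrm{S}=4\cdot5\cdot4\cdot5=400$ and $\mathrm{D}=2\cdot5\cdot4\cdot15/3=200$, matching the table. Finally, we would obtain $\mathrm{B}_{\mathrm{Dic}_{4n}}(3)=\binom{4n}{3}-\mathrm{S}-\mathrm{D}$ as a cross-check on the direct count.
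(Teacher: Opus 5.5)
\textbf{Gap: the proposal follows the paper's route but never carries out the classification.} Your split into the four types $\{a^i,a^j,a^k\}$, $\{a^i,a^j,a^kb\}$, $\{a^i,a^jb,a^kb\}$, $\{a^ib,a^jb,a^kb\}$, followed by a collision analysis, is the same route the paper takes (one lemma per type, then summing). However, the proposal stops where the proof begins. No type is actually classified, so none of the six formulas is derived. Agreement with the tables at $n=3,5$ is only a consistency check. What has to be shown is the following.
\begin{itemize}
\item For $\{a^i,a^j,a^k\}$: the set is never MSTD ($6$ sums versus $7$ differences generically, $5$ versus $6$ when two exponents differ by $n$). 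It is balanced exactly when $\{i,j,k\}$ is a $3$-term progression modulo $2n$. These number $2n(n-1)$, reduced by $4n/3$ when $3\mid n$. The reduction occurs because each of the $2n/3$ cosets of the subgroup of order $3$ satisfies all three progression congruences and would otherwise be counted three times. These cosets are balanced either way, so the $3\mid n$ dependence is an inclusion--exclusion effect, not sets changing category.
\item For $\{a^ib,a^jb,a^kb\}$: $D+D=a^n(D-D)$, so every such set is balanced.
\item For $\{a^i,a^j,a^kb\}$: the MDTS sets are exactly those with $i+j\equiv 0 \pmod{2n}$ ($6$ versus $7$). The sets with $i\in\{0,n\}$, $j=n$, or $i+j\equiv n$ are balanced.
\item For $\{a^i,a^jb,a^kb\}$: the MDTS sets are exactly those with $i\in\{0,n\}$ and $k-j\not\equiv n$. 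The sets with $2i\equiv \pm(k-j)$ or $2i\equiv n\pm(k-j)$ are balanced.
\end{itemize}
These patterns are the entire source of the MDTS and balanced counts, and your proposal identifies none of them.

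\textbf{The one counting step you describe gives the wrong MSTD count.} Each mixed type contains $4n(n-1)(n-3)$ sets with no coincidence at all. So ``generic sets minus degenerate configurations'' yields $8n(n-1)(n-3)$, which is $0$ for $n=3$ and $320$ for $n=5$; the correct values are $24$ and $400$. The missing $4n(n-1)$ sets are degenerate yet MSTD. They contain two elements differing by the factor $a^n$: $\{a^i,a^{i+n},a^kb\}$ with $1\le i\le n-1$, and $\{a^i,a^jb,a^{j+n}b\}$ with $i\notin\{0,n\}$. For these, the coincidence shrinks the difference set to $4$ elements but the sumset only to $7$. So every collision pattern must be sorted individually into MSTD, MDTS or balanced; discarding the degenerate ones loses sets of every kind.
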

\begin{remark}
Let $n\geq 3$ be an odd integer. From Theorem \ref{Theorem1.1}, we obtain, as $n \to \infty$,
$$
  \begin{aligned}
    \mathrm{S}_{\mathrm{Dic}_{4n}}(3) \sim 6 \mathrm{D}_{\mathrm{Dic}_{4n}}(3),\quad  \mathrm{S}_{\mathrm{Dic}_{4n}}(3) \sim 6 \mathrm{B}_{\mathrm{Dic}_{4n}}(3),\quad  \mathrm{D}_{\mathrm{Dic}_{4n}}(3) \sim  \mathrm{B}_{\mathrm{Dic}_{4n}}(3).
 \end{aligned}$$
 In particular, for sufficiently large odd $n$, the number of MSTD sets is asymptotically six times the number of MDTS or balanced sets.
\end{remark}

\begin{corollary}\label{Corollary1.2}
Let $n\geq 3$ be an odd integer and $\mathcal{A}_{4n,3}$ denote the collection of subsets of $\mathrm{Dic}_{4n}$ of size $3$. Then, except for $n=3$, the collection $\mathcal{A}_{4n,3}$ contains more MSTD sets than MDTS sets.
\end{corollary}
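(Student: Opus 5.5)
The plan is to deduce Corollary \ref{Corollary1.2} directly from the exact formulas in Theorem \ref{Theorem1.1}, comparing $\mathrm{S}_{\mathrm{Dic}_{4n}}(3)$ with $\mathrm{D}_{\mathrm{Dic}_{4n}}(3)$ in each of the two cases $3\mid n$ and $3\nmid n$. In both cases the MSTD count is $4n(n-1)(2n-5)$. After multiplying through by $3/(2n)>0$, the inequality $\mathrm{S}>\mathrm{D}$ becomes a polynomial inequality in $n$.

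\emph{Case $3\mid n$.} We need
$$6(n-1)(2n-5) > 2n^2+3n-3,$$
that is, $12n^2-42n+30>2n^2+3n-3$, or equivalently $10n^2-45n+33>0$. The roots of this quadratic are $(45\pm\sqrt{2025-1320})/20=(45\pm\sqrt{705})/20$, approximately $0.92$ and $3.58$. Among odd multiples of $3$, the only value at or below $3.58$ is $n=3$. At $n=3$ the left side is $90-135+33=-12<0$, so the inequality fails there. As a consistency check, the formulas give $\mathrm{S}=24$ and $\mathrm{D}=48$, matching Tables \ref{table 1} and \ref{table 2}. For $n\ge 9$ the inequality clearly holds.

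\emph{Case $3\nmid n$.} Here $n$ is odd and $n\ge5$. We need
$$6(n-1)(2n-5)>(n-1)(2n+5).$$
Since $n-1>0$, this reduces to $12n-30>2n+5$, i.e.\ $n>3.5$, which holds for all $n\ge5$.

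Combining the two cases, $\mathrm{S}_{\mathrm{Dic}_{4n}}(3)>\mathrm{D}_{\mathrm{Dic}_{4n}}(3)$ for every odd $n\ge5$, while $n=3$ is a genuine exception with reversed inequality. There is no real obstacle here. The only point needing care is locating the quadratic's larger root relative to the admissible odd values of $n$ in each congruence class. This also explains why $n=3$, and only $n=3$, is excluded.
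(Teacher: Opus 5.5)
Your proof is correct and takes the same route as the paper, which simply declares the corollary an immediate consequence of Theorem \ref{Theorem1.1}. You carry out that comparison explicitly, reducing it to $10n^2-45n+33>0$ when $3\mid n$ and to $n>7/2$ when $3\nmid n$, and you correctly confirm that $n=3$ is a genuine exception ($24<48$).
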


It has been proved in (\cite{gr12}, Theorem 2.1) that  for any finite group $G$, if $|A|>|G|/2$, then $|A+A|=|A-A|$, where $A$ is a finite subset of  $G$. This implies that if $|A|>2n$, then $A$ is a balanced set in $\mathrm{Dic}_{4n}$. Consequently, it is natural to investigate what happens in the boundary case $|A|=2n$. The following theorem provides some necessary bounds for $ \mathrm{S}_{\mathrm{Dic}_{4n}}(2n,A), \mathrm{D}_{\mathrm{Dic}_{4n}}(2n,A), \mathrm{B}_{\mathrm{Dic}_{4n}}(2n,A)$, for \textit{subsets of type $A$} whose exponents of the rotation elements form an arithmetic progression of length $n$ with common difference $1$.

\begin{theorem}\label{Theorem1.7}
Let $A=R \cup Sb$ be a subset of $\mathrm{Dic_{4n}}$, where  $$R=\{a^{r},a^{r+1},\ldots,a^{r+n-1}\}, \quad 0\leq r\leq n,$$  and $$S=\{a^{i_1},a^{i_2},\ldots, a^{i_n}\}\subset \mathcal{R}_0, \quad 0\leq i_1,i_2,\ldots, i_n\leq 2n-1.$$  Then the following hold.
\begin{enumerate}
    \item[\upshape(1)] If $n\geq6$ is an even integer, then
      $$\begin{aligned}
  \mathrm{S}_{\mathrm{Dic}_{4n}}(2n,A) &\geq (n+1)\bigl(7\cdot 2^{n-3}-4\bigr),\\[8pt] 
  \quad
   \mathrm{D}_{\mathrm{Dic}_{4n}}(2n,A)&\geq 2(n+1) ,\\[8pt]
   \quad
    \mathrm{B}_{\mathrm{Dic}_{4n}}(2n,A)&\geq (n+1)\sum_{\substack{i=3\\i\text{~odd}}}^{n+3}\binom{2n-i}{n-3} .
    \end{aligned}
    $$
    \item[\upshape(2)] If $n\geq6$ is an odd integer, then
      $$\begin{aligned}
    \mathrm{S}_{\mathrm{Dic}_{4n}}(2n,A) &\geq (n+1)\bigl(7\cdot 2^{n-3}-4\bigr),\\[8pt]
   \quad \mathrm{D}_{\mathrm{Dic}_{4n}}(2n,A)&=0,\\[8pt]
  \quad  \mathrm{B}_{\mathrm{Dic}_{4n}}(2n,A)&\geq\binom{2n}{n}(n+1)-2^{n}(n+1).
    \end{aligned}
    $$
\end{enumerate}
\end{theorem}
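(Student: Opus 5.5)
Fix $r$ and write $R=a^{r}P$ with $P=\{1,a,\ldots,a^{n-1}\}$ and $T=\{i_1,\ldots,i_n\}\subset\mathbb{Z}/2n\mathbb{Z}$ for the exponent set of $S$. The factor $n+1$ in every bound comes from the $n+1$ admissible values $0\le r\le n$. Conjugating by $a^{j}$ or multiplying by a suitable central element should move the data $(r,T)$ to $(0,T')$ by an explicit affine map without changing $|A+A|$ or $|A-A|$. So I would fix $r=0$ and count the sets $T$ of size $n$ in each class. Granting this reduction, it suffices to get the count for $r=0$ and multiply by $n+1$, since distinct $r$ give distinct $R$ and hence distinct sets $A$.

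The first step is to compute $A+A$ and $A-A$ explicitly using Lemma~2.6. For the rotation part, $R+R=\{a^{2r},\ldots,a^{2r+2n-2}\}$ has $2n-1$ elements, and $R-R=\{a^{-(n-1)},\ldots,a^{n-1}\}$ also has $2n-1$ elements. The only missing rotation is $a^{2r+2n-1}$ in the sum case and $a^{n}$ in the difference case. The products $a^{k}b\cdot a^{m}b=a^{k-m+n}$ contribute the rotations $a^{n+(T-T)}$ to $A+A$. Writing $(a^{m}b)^{-1}=a^{n+m}b$, they contribute $a^{k}b\,a^{n+m}b=a^{k-m}$, that is, the rotations $a^{T-T}$, to $A-A$. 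The mixed products give reflection parts: $R\cdot Sb$ and $Sb\cdot R$ give $a^{r+[0,n-1]\pm T}b$-type sets in the sum, and similar shifted sets appear in the difference. Since $|T|=n$ and the shifts run over an interval of length $n$, these reflection parts are usually all of $\mathcal{F}_0$.

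So $|A\pm A|$ is decided by two conditions:
\begin{enumerate}
\item[(a)] whether the missing rotation, $a^{2r-1}$ for sums and $a^{n}$ for differences, is recovered from $n+(T-T)$ or from $T-T$;
\item[(b)] whether the reflection parts are complete.
\end{enumerate}
For the difference set, $a^{n}$ is recovered exactly when $n\in T-T$, that is, when $T$ contains an antipodal pair. The sum set recovers $a^{2r-1}$ exactly when $2r-1-n\in T-T$, that is, when $T$ contains a pair at odd distance $n-1$ (shifted appropriately).

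The second step is to choose explicit families of $T$. For MSTD, I would take $T$ with no antipodal pair but containing a pair at difference $n-1$, with full reflection coverage. Counting such $T$ via the structure ``one element from each antipodal pair $\{j,j+n\}$'' gives $2^{n}$ candidate sets. Subtracting those without a difference-$(n-1)$ pair, or with incomplete coverage, should give $7\cdot2^{n-3}-4$.

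For MDTS with $n$ odd, I would show the sum side always wins. For MDTS with $n$ even, I would exhibit two explicit $T$ per $r$, for instance $T$ an interval $\{0,\ldots,n-1\}$ and a shift, where $n\in T-T$ fails to be balanced by the sum side.

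For the balanced bound with $n$ odd, all $T$ containing an antipodal pair should be balanced, since both missing rotations are then recovered and the reflection parts are full. This gives $\binom{2n}{n}-2^{n}$. For $n$ even, I would parametrize by the position $i$ of the first relevant element and count completions with $\binom{2n-i}{n-3}$, summing over odd $i$.

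The main obstacle is step (b): controlling exactly when the reflection parts are complete, and separating the parity cases for $n$. Here a parity argument on $T+T$ versus $T-T$ modulo $2$ should be the key. I would first handle it by showing that coverage of the reflections fails only for near-interval $T$, which the explicit ``$-4$'' and MDTS families account for.
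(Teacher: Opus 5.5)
\textbf{There is a genuine gap.} Your computation of $A\pm A$ is the same as the paper's, but the step that drives every count is missing. It is not that some $T$ lack the difference $l_r:=2r+n-1$ and must be subtracted; the point is that \emph{none} do.
For $n$ even, a set $T$ with $n\notin T-T$ contains one element of each pair $\{j,j+n\}$. Since $j\equiv j+n\pmod 2$, it has exactly $n/2$ odd and $n/2$ even elements. Lemma~\ref{Lemma 2.2} then shows, by comparing sums of residues, that the odd number $l_r$ lies in $T-T$.
For $n$ odd, $l_r$ is even and any $n$-subset has at least $(n+1)/2$ elements of one parity, so $T\cap(T+l_r)\neq\emptyset$ (Lemma~\ref{Lemma 2.3}).
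Without this, ``subtracting those without a difference-$(n-1)$ pair'' gives no bound at all. You also use this fact tacitly in two other places:
\begin{itemize}
\item when you say ``both missing rotations are recovered'' for odd $n$, since an antipodal pair only recovers $a^n$ in $A-A$;
\item when you claim $\mathrm{D}_{\mathrm{Dic}_{4n}}(2n,A)=0$ for odd $n$. There the sum side does not ``always win'' (most such sets are balanced); it merely never loses.
\end{itemize}
Your parity idea is attached to the wrong step. Reflection coverage is a pure covering statement: Lemma~\ref{Lemma 2.4} handles $n\in T-T$, and the eight-case analysis on the first three antipodal pairs in Lemma~\ref{Lemma 2.5} handles the rest. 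Your interval intuition would in fact beat Lemma~\ref{Lemma 2.5} if you proved it. The condition $z\notin r+[0,n-1]+T$ forces $T=z-r+[1,n]$, so only the $2n$ intervals fail, and $2^n-2n\ge 7\cdot 2^{n-3}-4$ for $n\ge 6$. It also gives $|(R+Sb)\cup(Sb+R)|\ge 2n-1$, which is exactly what rules out MDTS when $n$ is odd and $n\notin T-T$.

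\textbf{Further problems.} Your MDTS example for even $n$ is wrong. An interval $T=\{c,\dots,c+n-1\}$ meets every antipodal pair exactly once, so $n\notin T-T$. Meanwhile $T-T$ is every residue except $n$, so it contains $l_r$. The residues missed by $R+Sb$ and $Sb+R$ differ when $n$ is even (they would coincide only if $2r\equiv n+1\pmod{2n}$). Hence $|A+A|=4n>4n-1=|A-A|$, and this is an MSTD set. The paper instead takes $T$ to be all even or all odd residues. Then $T-T$ is the even residues, which contains $n$ but not the odd $l_r$, and Lemma~\ref{Lemma 2.4} fills the reflections, so $|A+A|=4n-1<4n=|A-A|$.

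Your even balanced count only gestures at the target formula. The paper's families are $T\supseteq\{k,\,k\pm l_r,\,k+n\}$ for $k=0,\dots,n/2$, with the remaining elements avoiding the pairs $\{j,j+n\}$, $j<k$. These are balanced because $n,l_r\in T-T$.

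Finally, the reduction to $r=0$ is not available. Conjugation by $a^j$ fixes $R$ and only shifts $T$ by $2j$. The automorphisms $a\mapsto a^{\pm1}$, $b\mapsto a^vb$, together with multiplication by the central element $a^n$, move $r$ only within $\{r,r+n,n+1-r,1-r\}$ modulo $2n$; all other automorphisms destroy the interval shape of $R$. The $r$-dependence through $l_r$ is genuine: for $n=6$ the number of $T$ giving MDTS sets is $2$ or $8$ according as $\gcd(l_r,6)=1$ or $3$. So you should keep $r$ general and use families that work for every $r$, as the paper does.
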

\begin{theorem}\label{Theorem 1.9}
  Let $n\geq6$ be an integer. Then the following hold.
\begin{enumerate}
    \item[\upshape(1)] If $n$ is even, then
    $$
    \mathrm{S}_{\mathrm{Dic}_{4n}}(2n)\geq (n+1)\bigl(7\cdot 2^{n-3}-4\bigr), \qquad
    \mathrm{D}_{\mathrm{Dic}_{4n}}(2n)\geq 2(n+1) , \qquad
    \mathrm{B}_{\mathrm{Dic}_{4n}}(2n)\geq (n+1)\sum_{\substack{i=3\\i\text{~odd}}}^{n+3}\binom{2n-i}{n-3} .
    $$
    \item[\upshape(2)] If $n$ is odd, then
    $$
    \mathrm{S}_{\mathrm{Dic}_{4n}}(2n)\geq (n+1)\bigl(7\cdot 2^{n-3}-4\bigr), \qquad
    \mathrm{D}_{\mathrm{Dic}_{4n}}(2n)\geq 0 , \qquad
    \mathrm{B}_{\mathrm{Dic}_{4n}}(2n)\geq  \binom{2n}{n}(n+1)-2^{n}(n+1).
    $$
\end{enumerate}  
\end{theorem}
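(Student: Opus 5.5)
This theorem follows at once from Theorem \ref{Theorem1.7}. For each $k$, the quantities $\mathrm{S}_{\mathrm{Dic}_{4n}}(k)$, $\mathrm{D}_{\mathrm{Dic}_{4n}}(k)$ and $\mathrm{B}_{\mathrm{Dic}_{4n}}(k)$ count MSTD, MDTS and balanced subsets of size $k$ among \emph{all} subsets of $\mathrm{Dic}_{4n}$. The restricted quantities $\mathrm{S}_{\mathrm{Dic}_{4n}}(k,A)$, $\mathrm{D}_{\mathrm{Dic}_{4n}}(k,A)$ and $\mathrm{B}_{\mathrm{Dic}_{4n}}(k,A)$ count the same kinds of subsets among those of a prescribed type. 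Since the sets of type $A$ form a subfamily of all subsets of size $2n$, monotonicity of counting gives
$$
\mathrm{S}_{\mathrm{Dic}_{4n}}(2n)\geq \mathrm{S}_{\mathrm{Dic}_{4n}}(2n,A),\quad \mathrm{D}_{\mathrm{Dic}_{4n}}(2n)\geq \mathrm{D}_{\mathrm{Dic}_{4n}}(2n,A),\quad \mathrm{B}_{\mathrm{Dic}_{4n}}(2n)\geq \mathrm{B}_{\mathrm{Dic}_{4n}}(2n,A).
$$

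Concretely, I would proceed in three steps.

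First, I check that every set of the form $A=R\cup Sb$ in Theorem \ref{Theorem1.7} has exactly $2n$ elements. The set $R$ consists of $n$ consecutive powers $a^{r},\dots,a^{r+n-1}$, which are distinct because $a$ has order $2n$. The set $S$ has $n$ distinct elements of $\mathcal{R}_0$. The map $x\mapsto xb$ is a bijection from $\mathcal{R}_0$ onto $\mathcal{F}_0$, so $Sb$ has $n$ distinct elements. Finally, $R\subseteq\mathcal{R}_0$ and $Sb\subseteq\mathcal{F}_0$ are disjoint. Hence $|A|=2n$.

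Second, I check that the count of type-$A$ sets in Theorem \ref{Theorem1.7} has no multiplicity. Each distinct set is counted once: $R$ is determined by $A\cap\mathcal{R}_0$ and $S$ by $A\cap\mathcal{F}_0$. For $1\le r\le n-1$ the sets $\{a^r,\dots,a^{r+n-1}\}$ are distinct. For $r=0$ and $r=n$ they are complementary, so also distinct. This accounts for the factor $n+1$, and there is no overcounting of type-$A$ sets inside the full family of size-$2n$ subsets.

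Third, I substitute the bounds of Theorem \ref{Theorem1.7} into the three displayed inequalities, treating the even and odd cases of $n\geq 6$ separately.
\begin{itemize}
\item For even $n$, this gives the three lower bounds stated in part (1) directly.
\item For odd $n$, the MSTD and balanced bounds transfer in the same way.
\item For the MDTS count with $n$ odd, Theorem \ref{Theorem1.7} gives $\mathrm{D}_{\mathrm{Dic}_{4n}}(2n,A)=0$. This yields only the trivial bound $\mathrm{D}_{\mathrm{Dic}_{4n}}(2n)\geq 0$, which is exactly what is stated.
\end{itemize}

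There is no real obstacle here. The only point needing care is the second step, that the parametrisation by $(r,S)$ used in Theorem \ref{Theorem1.7} does not count the same set more than once. This matters only to confirm that the bounds there are genuine counts of distinct subsets, and the disjointness argument above settles it.
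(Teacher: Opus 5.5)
Your proposal is correct and matches the paper's proof, which simply notes that Theorem \ref{Theorem1.7} counts a subfamily of the size-$2n$ subsets, so its bounds carry over by monotonicity. Your extra checks, that $|A|=2n$ and that $(r,S)\mapsto R\cup Sb$ is injective, are sound and make explicit what the paper leaves implicit; the injectivity is exactly what the odd-$n$ balanced bound $\binom{2n}{n}(n+1)-2^{n}(n+1)$ in Theorem \ref{Theorem1.7} tacitly relies on.
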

To establish our main results, we first prove several auxiliary lemmas (see Section \ref{section-2}). The proofs of Theorems \ref{Theorem1.0}, \ref{Theorem1.1}, and Corollary \ref{Corollary1.2} are given in Sections \ref{section-3} and \ref{section-4}, respectively. In Section \ref{section-5}, we prove Theorems \ref{Theorem1.7} and  \ref{Theorem 1.9}. In Section \ref{section-6}, we discuss future directions and pose several open problems.

\section{Preliminaries}\label{section-2}
In this section, we prove several auxiliary results that will be used in the subsequent proofs. 
\begin{lemma}\label{lem:triples-mod2n}
Let $n\geq3$ be an odd integer, and let
$$
\mathcal{S}_n = \{(i,j,k) \in \mathbb{Z}^3 : 0 \le i < j < k \le 2n-1\}.
$$
Define $\mathcal{T}_n \subset \mathcal{S}_n$ to consist of those triples for which at least one of the following congruences holds modulo $2n$:
$$
2i \equiv j+k,\qquad 2j \equiv i+k,\qquad 2k \equiv i+j.
$$
Then
$$
|\mathcal{T}_n| =
\begin{cases}
2n(n-1), & 3 \nmid n,\\[8pt]
\dfrac{2n(3n-5)}{3}, & 3 \mid n.
\end{cases}
$$
\end{lemma}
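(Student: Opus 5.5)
We count 3-element subsets $\{i,j,k\}$ of $\mathbb{Z}/2n\mathbb{Z}$ in which one element is the midpoint of the other two, i.e.\ $2x\equiv y+z \pmod{2n}$ for some labelling $\{x,y,z\}=\{i,j,k\}$. Since the three congruences are symmetric under permuting the roles, $|\mathcal{T}_n|$ is just the number of unordered $3$-subsets of $\mathbb{Z}/2n\mathbb{Z}$ having at least one such ``centre''. The plan is to count pairs (subset, centre) and then correct for subsets with more than one centre by inclusion–exclusion.

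\emph{Step 1: count (centre, subset) pairs.} Fix the centre $x\in\mathbb{Z}/2n\mathbb{Z}$. A subset with centre $x$ has the form $\{x, x+d, x-d\}$ with $d\not\equiv 0$ and $x+d\neq x-d$, i.e.\ $2d\not\equiv 0 \pmod{2n}$. This excludes $d=0$ and $d=n$. Since $d$ and $-d$ give the same subset, there are $(2n-2)/2=n-1$ subsets for each $x$, and so $2n(n-1)$ pairs in total.

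\emph{Step 2: subsets with several centres.} Suppose $x$ and $y$ are both centres of $\{x,y,z\}$. Then $2x\equiv y+z$ and $2y\equiv x+z$. Subtracting gives $3(x-y)\equiv 0\pmod{2n}$. If $3\nmid n$, then $\gcd(3,2n)=1$, so $x=y$, a contradiction. Hence each subset has exactly one centre and $|\mathcal{T}_n|=2n(n-1)$.

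If $3\mid n$, then $x-y\in\{\pm 2n/3\}$, and the subset must be a coset $\{x,x+2n/3,x+4n/3\}$ of the subgroup of order $3$. Conversely, in such a coset every element is a centre, since $2x\equiv (x+2n/3)+(x+4n/3)$ because $2n\equiv 0$. The subset is not degenerate because $n$ is odd, so $2n/3\neq n$.

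\emph{Step 3: correct the count.} When $3\mid n$ there are $2n/3$ such cosets, and each was counted $3$ times instead of once. Therefore
\[
|\mathcal{T}_n| = 2n(n-1)-2\cdot\frac{2n}{3}=\frac{2n(3n-5)}{3}.
\]

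The main obstacle is Step 2: we must check that two centres force the $3$-torsion coset structure and that such cosets have all three elements as centres. We also need to confirm that the excluded value $d=n$ (which makes $x+d=x-d$) is the only degeneracy. That degeneracy is exactly where we use that the modulus is $2n$; the hypothesis that $n$ is odd enters only to guarantee $2n/3\ne n$ (automatic) and to match the statement.
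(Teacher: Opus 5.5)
Your proof is correct. It shares the paper's overall architecture: first compute a count with multiplicity equal to $2n(n-1)$, then note that two simultaneous congruences force $3(x-y)\equiv 0 \pmod{2n}$. When $3\mid n$ you subtract $2\cdot\frac{2n}{3}$ for the cosets $\{x,x+\frac{2n}{3},x+\frac{4n}{3}\}$, each of which was counted three times. That second half is essentially identical to the paper's.

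The difference is in how the multiplicity count is obtained.
\begin{itemize}
\item The paper keeps the ordering $0\le i<j<k\le 2n-1$ and treats the three congruences one at a time. It lifts each to a single integer equation ($j+k=2i+2n$, $i+k=2j$, $i+j=2k-2n$) and counts lattice points in the resulting ranges, which gives $\frac{n(n-1)}{2}+n(n-1)+\frac{n(n-1)}{2}$. In effect it partitions by the \emph{position} of the centre among $i<j<k$.
\item You pass to unordered $3$-subsets of $\mathbb{Z}/2n\mathbb{Z}$ and partition by the \emph{value} of the centre. Translation invariance then gives $n-1$ subsets per centre immediately, after excluding $d\in\{0,n\}$ and identifying $d$ with $-d$.
\end{itemize}

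Your route is shorter and avoids the boundary bookkeeping the paper needs, such as ruling out the other possible integer lifts of each congruence. It also makes transparent that the formula holds for every $n\ge 2$ regardless of parity. The oddness hypothesis is never used in either argument; as you observe, $\frac{2n}{3}\neq n$ needs no parity assumption. The paper's route does give the finer breakdown by which of $i<j<k$ is the centre, but that information is not used later.
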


\begin{proof} Let $(i,j,k) \in \mathcal{T}_n$. We first count the number of solutions to each congruence separately, and then subtract the overlaps.

Assume that \begin{equation}\label{condition-1}
    2i \equiv j+k \pmod{2n}.
\end{equation} Since $3 \le j + k \le 4n - 3$, it follows that either
$$j + k = 2i \quad \text{or} \quad j + k = 2i + 2n.$$
The equality $j + k = 2i$ is not possible because $k > j > i$. Hence, $$j+k=2i+2n,$$
 which  implies $k=2i+2n-j$. Since $j<k$, we obtain  
$j<2i+2n-j$, and therefore $j\leq i+n-1$. Moreover, as $k\leq 2n-1$, we get  $2i+2n-j\leq 2n-1$, and thus $j\geq  2i+1$. It follows that  
\begin{equation}\label{choices for j}
    2i+1 \leq j \leq i+n-1.
\end{equation}
Therefore, necessarily
$0\leq i\leq n-2$. Using \eqref{choices for j}, the number of choices for  $j$ is $n-i-1$, for each $i<j$. Hence, the total number of triples $(i,j,k)$ satisfying  \eqref{condition-1} is \begin{equation}\label{2.3.1}
    \sum_{i=0}^{n-2}(n-i-1)=\frac{n(n-1)}{2}.
\end{equation}

Assume that \begin{equation}\label{conditon-2}
    2j \equiv i+k \pmod{2n}.
\end{equation}  Since $k\leq 2n-1$, the case $i+k=2j+2n$ is not possible. Thus,  $i+k=2j$, which gives $$k=2j-i.$$
Using $k\leq 2n-1$ we get $2j-(2n-1)\leq i$. For $1\leq j\leq n-1$ we have $2j-(2n-1)\leq 0$,  therefore \begin{equation}\label{condition-2.1}
    0\leq i \leq j-1.
\end{equation}
For $n\leq j\leq 2n-2$ we have $i\geq 2j-(2n-1)\geq 1$, therefore \begin{equation}\label{condition-2.2}
    2j-(2n-1)\leq i \leq j-1.
\end{equation}

For  $1\leq j\leq n-1$, using \eqref{condition-2.1}, we get $j$  possible values for $i$, and for $n\leq j\leq 2n-2$, using \eqref{condition-2.2}, we get $2n-1-j$ possible values of $i$.  Therefore, the total number of $(i,j,k)$ is 
\begin{equation}\label{2.3.2}
    \sum_{j=1}^{n-1} j + \sum_{j=n}^{2n-2}(2n-1-j)=\frac{n(n-1)}{2}+\frac{n(n-1)}{2}= n(n-1).
\end{equation}
Assume that \begin{equation}\label{condition-3}
    2k \equiv i+j \pmod{2n}.
\end{equation} Since $i<j<k$, the case  $i+j=2k$ is not possible. Thus, $$i+j=2k-2n.$$ Moreover, since $i<j$, we have $2i<i+j$, and therefore $2i<2k-2n$, which implies $i\leq k-n-1$.
Thus, for fixed $k$, admissible values of $i$ exist if and only if  $k\geq n+1$. Consequently, the total number of triples  $(i,j,k)$ satisfying \eqref{condition-3} is
\begin{equation}\label{2.3.3}
   \sum_{k=n+1}^{2n-1}(k-n)=\frac{n(n-1)}{2}. 
\end{equation}
 Hence, the total number of triples  $(i,j,k)$ 
satisfying all three congruences \eqref{condition-1}, \eqref{conditon-2}, and \eqref{condition-3}, counted with multiplicity, is obtained by combining \eqref{2.3.1}, \eqref{2.3.2}, and \eqref{2.3.3}:  $$\frac{n(n-1)}{2}+n(n-1)+\frac{n(n-1)}{2}=2n(n-1).$$
This counts all solutions of the given congruences, allowing repetitions. However, we are interested only in distinct solutions. Observe that any two of the three congruences imply the third one. Therefore, it suffices to determine all triples $(i,j,k)$  that satisfy any two of the above congruences. This occurs if and only if
$$3(i-j)\equiv0 \pmod{2n}.$$
If $3\nmid n$, then $\gcd(3,2n)=1$, and hence  $i\equiv j\pmod{2n}$, which is
impossible. Therefore, in this case, no overlaps occur.\par
On the other hand, if $3\mid n$, then $\gcd(3,2n)=3$. Writing $n=3m$, the overlapping solutions are precisely of the form
$$(i,j,k)=(i, i+2m, i+4m),\qquad 0 \leq i \leq 2m-1.$$
There are exactly  $2m=\frac{2n}{3}$ such triples, and each of them satisfies all three congruences. Therefore, when  $3\mid n$, the number of distinct triples $(i,j,k)$ is
$$2n(n-1) - 2\cdot\frac{2n}{3}= 2n(n-1)-\frac{4n}{3}= \frac{2n(3n-5)}{3}.$$
This completes the proof.
\end{proof}
\begin{remark}\label{remark 2.3}
Let $n\geq6$ be an integer. If $\mathcal{S}\subset \mathbb{Z}/2n\mathbb{Z}$ is a subset with $|\mathcal{S}|=n$ satisfying $n\not\in \mathcal{S}-\mathcal{S}$. Then  $\mathcal{S}$ contains at most one element from each of the pairs 
$$(0,n),~(1,n+1),~(2,n+2),\ldots,(n-1,2n-1).$$
Thus, the total number of such subsets $\mathcal{S}$ is $2^n$.   
\end{remark}
\begin{lemma}\label{Lemma 2.2}
Let $n\geq6$ be an even integer and let $\mathcal{S}\subset \mathbb{Z}/2n\mathbb{Z}$ with $|\mathcal{S}|=n$ such that $n\not\in \mathcal{S}-\mathcal{S}$, where $\mathcal{S-S}=\{s_1-s_2\pmod{2n}: s_1,s_2\in \mathcal{S}\}$. Then for a fixed integer $0\leq r\leq n$, $2r+n-1\pmod{2n}\in \mathcal{S}-\mathcal{S}.$ 
\end{lemma}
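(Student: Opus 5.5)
The plan is to reduce the claim to a statement about translation-invariance of $\mathcal{S}$, and then use a parity argument in $\mathbb{Z}/2n\mathbb{Z}$. Throughout, set $d = 2r+n-1 \pmod{2n}$. Since $n$ is even, $d$ is odd, so in particular $d\neq 0$ and $d \neq n$; this parity is the only place where the hypothesis that $n$ is even enters.

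First I will use Remark~\ref{remark 2.3}. Since $n\notin \mathcal{S}-\mathcal{S}$, the set $\mathcal{S}$ contains at most one element from each of the $n$ pairs $(i,i+n)$. Because $|\mathcal{S}|=n$, it contains \emph{exactly} one element from each pair. Equivalently, $\mathcal{S}$ and $\mathcal{S}+n$ are disjoint and their union is all of $\mathbb{Z}/2n\mathbb{Z}$, so
$$\mathcal{S}+n = (\mathbb{Z}/2n\mathbb{Z})\setminus \mathcal{S}.$$

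Next I argue by contradiction: suppose $d\notin \mathcal{S}-\mathcal{S}$. Then for every $x\in\mathcal{S}$ we have $x+d\notin\mathcal{S}$, so $x+d$ lies in the complement, which by the display above is $\mathcal{S}+n$. Hence $\mathcal{S}+d\subseteq \mathcal{S}+n$. Both sets have size $n$, so equality holds, and translating by $-n$ gives
$$\mathcal{S}+(d-n)=\mathcal{S}.$$
Here $d-n\equiv 2r-1 \pmod{2n}$ is odd. Consequently $\mathcal{S}$ is invariant under the cyclic subgroup $H=\langle 2r-1\rangle$, and so $\mathcal{S}$ is a union of cosets of $H$.

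The step needing the most care is showing that $n\in H$. Let $g=\gcd(2r-1,2n)$. Then $g$ is odd, and $H=g\mathbb{Z}/2n\mathbb{Z}$. Since $g$ is odd and divides $2n$, it divides $n$, so $n\in H$. (For $r=0$ we get $2r-1=-1$, so $H$ is the whole group, and the argument is unchanged.) Now take any $x\in\mathcal{S}$, which exists because $|\mathcal{S}|=n\geq 6$. Invariance under $H$ gives $x+n\in\mathcal{S}$, and therefore $n=(x+n)-x\in\mathcal{S}-\mathcal{S}$. This contradicts the hypothesis, so $2r+n-1 \pmod{2n}\in\mathcal{S}-\mathcal{S}$.
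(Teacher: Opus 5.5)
Your proof is correct, but it takes a different route from the paper.

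The paper uses that $n$ is even in its first step. It splits $\mathcal{S}$ into the $n/2$ odd elements $\mathcal{O}=\{o_1,\ldots,o_{n/2}\}$ and $n/2$ even elements, which works because $i$ and $i+n$ then have the same parity. Assuming $(\mathcal{S}+l_r)\cap\mathcal{S}=\emptyset$, it shows that the even part must be $\mathcal{O}-l_r+n$ and that $\mathcal{O}+l_r=\mathcal{O}-l_r$. It then sums elements modulo $2n$ to get $n l_r\equiv 0\pmod{2n}$, which is impossible since $l_r$ is odd.

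You avoid the parity split entirely. The complement identity $\mathcal{S}+n=(\mathbb{Z}/2n\mathbb{Z})\setminus\mathcal{S}$ gives at once that $\mathcal{S}$ is invariant under translation by the odd element $2r-1$. Since $\gcd(2r-1,2n)$ is odd, it divides $n$, and that finishes the argument. Your version is shorter and also skips the paper's case split on whether some even element lies in $\mathcal{E}_2$.

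One correction to your commentary: your argument never uses that $n$ is even, nor that $n\ge 6$. The oddness of $d$ plays no role. The only parity fact you need is that $d-n\equiv 2r-1$ is odd, which holds for every $n$. So your proof gives the conclusion for all $n$ under the hypotheses $|\mathcal{S}|=n$ and $n\notin\mathcal{S}-\mathcal{S}$.

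The two endings are interchangeable. From $\mathcal{S}+(2r-1)=\mathcal{S}$ you could instead sum over $\mathcal{S}$ to get $n(2r-1)\equiv 0\pmod{2n}$, which is exactly the paper's invariant.
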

\begin{proof} Let $\mathcal{S}\subset \mathbb{Z}/2n\mathbb{Z}$ be a subset satisfying $n\not\in \mathcal{S}-\mathcal{S}$. Then by Remark \ref{remark 2.3}, $\mathcal{S}$ contains at most one element from each of the pairs 
$$(0,n),~(1,n+1),~(2,n+2),\ldots,(n-1,2n-1).$$

Fix an integer $r$ with $0\leq r\leq n$, and define $l_{r}=2r+n-1\pmod{2n}.$
Then $l_r$ is an odd integer. To prove $l_{r}\in\mathcal{S}-\mathcal{S} $, it suffices to prove that $(\mathcal{S}+l_r)\cap\mathcal{S}\neq \emptyset$, where $\mathcal{S}+l_r=\{s+l_r \pmod{2n}: s\in \mathcal{S}\}$. Since $n$ is an even positive integer, it is easy to observe that  $\mathcal{S}$ contains half even and half odd integers. Write 
\begin{align*}
&\mathcal{S}=\{o_1,o_2,\ldots, o_{n/2},e_1,e_2,\ldots,e_{n/2}\},\\
   &\mathcal{E}_1=\{o_1+l_r\pmod{2n},\ldots, o_{n/2}+l_r\pmod{2n}\},\\ &\mathcal{E}_2=\{o_1-l_r\pmod{2n},\ldots,o_{n/2}-l_r\pmod{2n}\},\\ &\mathcal{E}_3=\{o_1-l_r+n\pmod{2n},\ldots,o_{n/2}-l_r+n\pmod{2n}\}.  
\end{align*}
 where $o_i,e_i$ are distinct odd and even integers, respectively. Since $n\not\in \mathcal{S}-\mathcal{S}$, we have $\mathcal{E}_2\cap \mathcal{E}_3=\emptyset$ and $|\mathcal{E}_2\cup \mathcal{E}_3|=n$, thus $\mathcal{E}_2\cup \mathcal{E}_3$ contains all even integers of $\mathbb{Z}/2n\mathbb{Z}$. Let
\begin{align}\label{2.10}
\mathcal{E}=\mathcal{E}_2\cup \mathcal{E}_3.      
 \end{align}
 If any $e_j\in \mathcal{E}_2$ then we are done as
$(\mathcal{S}+l_r)\cap\mathcal{S}\neq \emptyset$. Therefore, set $$\mathcal{S}=\{o_1,o_2,\ldots, o_{n/2},o_1-l_r+n\pmod{2n},\ldots,o_{n/2}-l_r+n\pmod{2n}\},$$    then $$\mathcal{S}+l_r=\{o_1+l_r\pmod{2n},\ldots, o_{n/2}+l_r\pmod{2n},o_1+n\pmod{2n},\ldots,o_{n/2}+n\pmod{2n}\}.$$    

If possible, assume that $(\mathcal{S}+l_r)\cap\mathcal{S} =\emptyset$, this forces $\mathcal{E}_1\cap \mathcal{E}_3=\emptyset$. Since $|\mathcal{E}_1\cup\mathcal{E}_3|=n$ we have $\mathcal{E}=\mathcal{E}_1\cup \mathcal{E}_3$. From (\ref{2.10}) we have $\mathcal{E}_1=\mathcal{E}_2$.
Therefore, we must have
$$
    \sum_{i\in \mathcal{E}_1} i\equiv \sum_{i\in \mathcal{E}_2}i\pmod{2n},
$$
it follows that
$$nl_r\equiv 0\pmod{2n}$$
but this forces $2\mid l_r$, which is a contradiction. Hence $( \mathcal{S}+l_r)\cap\mathcal{S}\neq \emptyset$. This completes the proof.
\end{proof}

\begin{lemma}\label{Lemma 2.3}
Let $n\geq6$ be an odd integer and $\mathcal{S}\subset \mathbb{Z}/2n\mathbb{Z}$ with $|\mathcal{S}|=n$. Then for a fixed integer $0\leq r\leq n$, $2r+n-1\pmod{2n}\in \mathcal{S}-\mathcal{S},$  where $\mathcal{S-S}=\{s_1-s_2\pmod{2n}: s_1,s_2\in \mathcal{S}\}$.
\end{lemma}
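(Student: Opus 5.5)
Since $n$ is odd, $l_r = 2r+n-1 \pmod{2n}$ is even; in fact $l_r\equiv 2r+n-1$ with $n-1$ even. I will first dispose of the degenerate value: if $l_r\equiv 0 \pmod{2n}$, then $l_r=s-s\in \mathcal{S}-\mathcal{S}$ for any $s\in\mathcal{S}$ (and $\mathcal{S}\neq\emptyset$). So we may assume $l_r\not\equiv 0$. As in the proof of Lemma \ref{Lemma 2.2}, it suffices to show $(\mathcal{S}+l_r)\cap \mathcal{S}\neq\emptyset$, where $\mathcal{S}+l_r$ is taken modulo $2n$.

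The key step is to decompose $\mathbb{Z}/2n\mathbb{Z}$ into the cosets of the cyclic subgroup $H=\langle l_r\rangle$. Let $d=\gcd(l_r,2n)$, so $H$ has order $t=2n/d$ and there are $d$ cosets. Since $l_r$ is even, $d$ is even. Since $l_r\not\equiv 0$, we have $d<2n$, hence $d\mid n$ would force $d$ odd unless $d$ divides $2n$ with the factor $2$. Writing $d=2d'$ with $d'\mid n$, we get $t=n/d'$. Because $n$ is odd, $t$ is odd, and $t\ge 3$ as $d<2n$ and $t=2n/d$ with $d$ even and $d\neq 2n$ gives $t\geq 2$, hence $t\ge 3$ by oddness.

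On each coset $x+H$, the map $s\mapsto s+l_r$ is a cyclic shift on a cycle of odd length $t$. If $\mathcal{S}\cap(\mathcal{S}+l_r)=\emptyset$, then $\mathcal{S}$ contains no two consecutive elements along any such cycle. An independent set in an odd cycle $C_t$ has size at most $(t-1)/2$. Summing over the $d$ cosets gives
$$|\mathcal{S}|\le d\cdot\frac{t-1}{2}=\frac{2n-d}{2}<n,$$
which contradicts $|\mathcal{S}|=n$. Hence $l_r\in\mathcal{S}-\mathcal{S}$.

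The main (mild) obstacle is verifying the parity claim that each cycle length $t$ is odd. This is exactly where the oddness of $n$ enters: $2n$ has only one factor of $2$, and $l_r$ even absorbs it into $d$. The hypothesis $n\ge 6$ is not needed beyond what is stated, and no condition like $n\notin\mathcal{S}-\mathcal{S}$ is required, in contrast with Lemma \ref{Lemma 2.2}.
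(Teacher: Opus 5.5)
Your proof is correct, but it takes a different route from the paper. The paper never looks at the subgroup $\langle l_r\rangle$. Since $|\mathcal{S}|=n$ is odd, one parity class of $\mathbb{Z}/2n\mathbb{Z}$ contains at least $\frac{n+1}{2}$ elements of $\mathcal{S}$; this class is a coset of the index-$2$ subgroup of even residues and has exactly $n$ elements. Because $l_r$ is even, translation by $l_r$ maps that class to itself, so $\mathcal{S}$ and $\mathcal{S}+l_r$ each have at least $\frac{n+1}{2}$ elements inside a set of size $n$, and by pigeonhole they must meet. That argument is a single line and needs neither the gcd computation nor your separate treatment of $l_r\equiv 0$ (a case that does occur, at $r=\frac{n+1}{2}$). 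Your coset-and-odd-cycle argument costs a little more setup, but it proves more. Whenever $l_r\notin\mathcal{S}-\mathcal{S}$ it gives $|\mathcal{S}|\le n-\frac{d}{2}$ with $d=\gcd(l_r,2n)$, whereas the parity argument gives only $|\mathcal{S}|\le n-1$. It also isolates the real mechanism, namely that $l_r$ has odd order. This explains why Lemma~\ref{Lemma 2.2} needs an extra hypothesis: for even $n$ the element $l_r$ is odd, so the cycles have even length, and taking every other element of each cycle (for instance, all even residues) gives a set of size exactly $n$ that avoids the difference $l_r$. One cosmetic point: the sentence beginning ``hence $d\mid n$ would force $d$ odd\ldots'' is muddled and should simply be deleted. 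The oddness of $t$ is justified by your next step, writing $d=2d'$ with $d'\mid n$, so that $t=n/d'$ divides the odd number $n$.
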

\begin{proof}
Let $2r+n-1\pmod{2n}=l_r$, then $l_r$ is an even integer. Since $n$ is an odd positive integer, $\mathcal{S}$ contains at least $\frac{n+1}{2}$ even integers or at least $\frac{n+1}{2}$ odd integers. Without loss of generality, assume that $\mathcal{S}$ contains at least $\frac{n+1}{2}$ even integers. Then $\mathcal{S}+l_r=\{s+l_r\pmod{2n}:s\in \mathcal{S}\}$ also contains at least $\frac{n+1}{2}$ even integers. It follows that $(\mathcal{S}+l_r)\cap\mathcal{S}\neq \emptyset$. This completes the proof.
\end{proof}
\begin{lemma}\label{Lemma 2.4}
Let $n\geq6$ be an integer and also let $\mathcal{S}\subset \mathbb{Z}/2n\mathbb{Z}$ with $|\mathcal{S}|=n$ such that $n\in \mathcal{S}-\mathcal{S}$. Then for any fixed $r\in \mathbb{Z}$ we have
$$\{r+t+s \pmod{2n}:0\leq t\leq n-1,\ s\in \mathcal{S}\}=\mathbb{Z}/2n\mathbb{Z}.$$
\end{lemma}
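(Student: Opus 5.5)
Since $n\in\mathcal{S}-\mathcal{S}$, we may fix $s_0,s_1\in\mathcal{S}$ with $s_1\equiv s_0+n \pmod{2n}$. Put $I=\{r+t: 0\le t\le n-1\}\pmod{2n}$, an interval of $n$ consecutive residues; the set in question is the sumset $I+\mathcal{S}$. It suffices to show that $I+\{s_0,s_1\}$ is already all of $\mathbb{Z}/2n\mathbb{Z}$, since adding the other elements of $\mathcal{S}$ can only enlarge the set.

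To see this, note that $I+s_0=\{r+s_0,r+s_0+1,\ldots,r+s_0+n-1\}$ consists of $n$ consecutive residues modulo $2n$, and $I+s_1=I+s_0+n=\{r+s_0+n,\ldots,r+s_0+2n-1\}$ consists of the next $n$ consecutive residues. Together they cover the $2n$ consecutive residues $r+s_0,\ldots,r+s_0+2n-1$, which form a complete residue system modulo $2n$. Hence $I+\mathcal{S}\supseteq (I+s_0)\cup(I+s_1)=\mathbb{Z}/2n\mathbb{Z}$, and the reverse inclusion is trivial.

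There is essentially no obstacle. The only point to check is that $I$ really has $n$ distinct residues, so that the two translates partition the group; this holds because $n<2n$. The hypotheses $n\ge 6$ and $|\mathcal{S}|=n$ are not needed for this statement; they matter only in the later application, where this lemma will show that $R\cdot Sb$-type products cover all of $\mathcal{F}_0$, so that the sumset contains every reflection.
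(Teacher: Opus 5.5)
Your proposal is correct and follows the paper's proof exactly: pick $s_0,s_0+n\in\mathcal{S}$ and observe that the two translates of the length-$n$ interval $\{r+t\}$ by $s_0$ and $s_0+n$ together cover $2n$ consecutive residues. You are also right that the hypotheses $n\ge 6$ and $|\mathcal{S}|=n$ play no role in this argument.
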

\begin{proof}
Since $n\in \mathcal{S}-\mathcal{S}$, we have some $s_1,s_2\in \mathcal{S}$ such that $s_2-s_1\equiv n \pmod{2n}$. Without loss of generality, suppose that $s_2=s_1+n$. Then 
$\{r+t+s_1 \pmod{2n},r+t+s_1+n\pmod{2n}:0\leq t\leq n-1\}=\mathbb{Z}/2n\mathbb{Z}$, therefore, the proof follows.
\end{proof}
\begin{lemma}\label{Lemma 2.5}
Let $n\geq6$ be an integer and also let $\mathcal{S}\subset \mathbb{Z}/2n\mathbb{Z}$ be a subset with $|\mathcal{S}|=n$ such that $n\not\in \mathcal{S}-\mathcal{S}$. Then, for any fixed $0\leq r\leq n$, there are at least $7\cdot 2^{n-3}-4$ such sets $\mathcal{S}$ for which 
$$\{r+t+s\pmod{2n}:0\leq t\leq n-1,\ s\in \mathcal{S}\}=\mathbb{Z}/2n\mathbb{Z}.$$
\end{lemma}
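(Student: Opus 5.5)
The plan is to describe the admissible sets $\mathcal{S}$ exactly, count them, and compare the count with $7\cdot 2^{n-3}-4$. By Remark \ref{remark 2.3}, a set $\mathcal{S}$ with $|\mathcal{S}|=n$ and $n\notin\mathcal{S}-\mathcal{S}$ contains at most one element from each pair $(i,i+n)$. Since there are $n$ pairs and $|\mathcal{S}|=n$, it contains exactly one element from each pair. So there are exactly $2^n$ such sets, and we only need to bound the number of \emph{bad} ones, i.e.\ those with $\{r+t+s\}\neq\mathbb{Z}/2n\mathbb{Z}$. We expect to show there are exactly $2n$ bad sets for every $r$.

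First, characterise when a residue $x$ fails to be covered. We have $x\notin\{r+t+s: 0\le t\le n-1,\ s\in\mathcal{S}\}$ if and only if $x-r-t\notin\mathcal{S}$ for all $0\le t\le n-1$. Equivalently, the window
$$W_x=\{x-r-n+1,\ldots,x-r\}$$
of $n$ consecutive residues is disjoint from $\mathcal{S}$. The complement of $W_x$ in $\mathbb{Z}/2n\mathbb{Z}$ is $W_x+n$, again a window of $n$ consecutive residues. So if $\mathcal{S}$ misses $W_x$, then $\mathcal{S}\subseteq W_x+n$, and since both sets have size $n$, we get $\mathcal{S}=W_x+n$.

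Hence a bad $\mathcal{S}$ must be a cyclic interval of $n$ consecutive residues. Conversely, every cyclic interval $\{c,c+1,\ldots,c+n-1\}$ contains exactly one element of each pair $(i,i+n)$, so it is admissible. It is also bad: taking $x$ with $W_x$ equal to the complementary interval produces an uncovered residue. There are exactly $2n$ such intervals, one for each starting point $c\in\mathbb{Z}/2n\mathbb{Z}$, and they are pairwise distinct as sets. Therefore the number of good sets is exactly $2^n-2n$, independently of $r$.

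It remains to check the elementary inequality $2^n-2n\ge 7\cdot 2^{n-3}-4$, which is equivalent to $2^{n-3}\ge 2n-4$. At $n=6$ both sides equal $8$. For the induction step, the left side doubles while the right side increases by only $2$, so the inequality holds for all $n\ge 6$. The only delicate point is the characterisation of bad sets, namely that missing a window of length $n$ forces $\mathcal{S}$ to equal the complementary window. This follows from the size count together with the one-per-pair structure, so we do not anticipate a genuine obstacle.
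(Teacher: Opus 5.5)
Your argument is correct, and it takes a genuinely different route from the paper.

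The paper proceeds by case analysis. It fixes which element of each of the first three pairs $(0,n),(1,n+1),(2,n+2)$ lies in $\mathcal{S}$, giving eight cases. In each case it computes which residues the three chosen elements fail to cover, and counts completions that repair this. Six cases are counted exactly ($2^{n-3}$ or $2^{n-3}-1$ good sets, with four exceptional sets in total). For $\{0,1,2\}\subset\mathcal{S}$ and $\{n,n+1,n+2\}\subset\mathcal{S}$, however, the paper only exhibits explicit subfamilies of size $2^{n-4}$. That is why it arrives at the lower bound $7\cdot 2^{n-3}-4$ rather than an exact count.

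You replace all of this with one structural observation. A residue $x$ is uncovered exactly when $\mathcal{S}$ avoids the window $W_x$, and since $|\mathcal{S}|=n$ this forces $\mathcal{S}=W_x+n$. So the bad sets are precisely the $2n$ cyclic intervals of length $n$, and there are exactly $2^n-2n$ good sets for every $r$.

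This is shorter and sharper than the paper's argument:
\begin{itemize}
\item It identifies the paper's four exceptional sets as intervals.
\item It shows that each of the two partially counted cases actually contains $2^{n-3}-(n-2)$ good sets.
\item It shows the stated bound is attained at $n=6$ and is strict for $n\ge 7$.
\end{itemize}
Plugging your count into Claim 1 of Theorem \ref{Theorem1.7} would likewise raise the MSTD bound there to $(n+1)(2^n-2n)$.

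A minor point: the step $\mathcal{S}=W_x+n$ uses only $|\mathcal{S}|=n$. The one-element-per-pair structure is needed only for the total count $2^n$ and for checking that intervals are admissible.
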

\begin{proof}
Let $\mathcal{S}=\{i_1,i_2,\ldots,i_n\}\subset  \mathbb{Z}/2n\mathbb{Z}$ be a subset such that $n\not\in \mathcal{S}-\mathcal{S}$, and also let $R=\{r+t+s\pmod{2n}:0\leq t\leq n-1,\ s\in \mathcal{S}\}$. Then $\mathcal{S}$ contains at most one element from each of the pairs 
$$(0,n),~(1,n+1),~(2,n+2),\ldots,(n-1,2n-1).$$
Without loss of generality, assume $i_1\in \{0,n\},i_2\in \{1,n+1\},i_3\in\{2,n+2\}$. Thus there are exactly eight possible choices for the triple $(i_1,i_2,i_3)$. We analyze these cases.
\begin{enumerate}
    \item If $\{0,n+1,2\}\subset\mathcal{S}$, then $R=\mathbb{Z}/2n\mathbb{Z}$. There are exactly $2^{n-3}$ such sets.
    \item If $\{0,n+1,n+2\}\subset\mathcal{S}$, then $\{r+t+s \pmod{2n}:0\leq t\leq n-1,\ s=0,n+1,n+2\}=(r+\mathbb{Z}/2n\mathbb{Z})\setminus\{r+n \pmod{2n}\}$. Suppose  there exists an element $i_j\in\mathcal{S}$ which  is the first element of one of the pairs
$$(3,n+3),(4,n+4),\ldots,(n-1,2n-1),$$
then $r+n\pmod{2n}\in\{r+t+s \pmod{2n}: 0\leq t\leq n-1,\ s=i_j \,\}$, and hence $R=\mathbb{Z}/2n\mathbb{Z}$. Thus, except for the single exceptional set $\mathcal{S}=\{0,n+1,n+2,n+3,n+4,\ldots, 2n-1\}$, we obtain $R=\mathbb{Z}/2n\mathbb{Z}$. Therefore, we have $2^{n-3}-1$ such desired sets.

\item If $\{0,1,n+2\}\subset\mathcal{S}$, then $\{r+t+s\pmod{2n}:0\leq t\leq n-1,\ s=0,1,n+2\}=(r+\mathbb{Z}/2n\mathbb{Z})\setminus\{r+n+1\pmod{2n}\}$. Moreover, the missing element $r+n+1\pmod{2n}\in R$ whenever  $\mathcal{S}\cap \{3,4,\ldots, n-1\}\neq\emptyset$. Thus, except for the single exceptional set $\mathcal{S}=\{0,1,n+2,n+3,n+4,\ldots, 2n-1\}$, we obtain $R=\mathbb{Z}/2n\mathbb{Z}$. Therefore, we have $2^{n-3}-1$ such desired sets.

\item If $\{n,1,n+2\}\subset\mathcal{S}$, then $R=\mathbb{Z}/2n\mathbb{Z}$. There are exactly $2^{n-3}$ such sets.

\item If $\{n,1,2\}\subset\mathcal{S}$, then $\{r+t+s\pmod{2n}:0\leq t\leq n-1,\ s=n,1,2\}=(r+\mathbb{Z}/2n\mathbb{Z})\setminus\{r\}$. Now, if there exists  an element $i_j\in\mathcal{S}$ that appears as  the second element of one of the pairs
$$(3,n+3),(4,n+4),\ldots,(n-1,2n-1),$$
then $r\in\{r+t+s\pmod{2n} : 0\leq t\leq n-1,\ s=i_j \,\},$ and hence $R=\mathbb{Z}/2n\mathbb{Z}$. Thus, except for the single exceptional set $\mathcal{S}=\{n,1,2,3,4,\ldots, n-1\}$ we obtain $R=\mathbb{Z}/2n\mathbb{Z}$. Therefore, we have $2^{n-3}-1$ such desired sets.

\item If $\{n,n+1,2\}\subset\mathcal{S}$,  then $\{r+t+s\pmod{2n}:0\leq t\leq n-1,\ s=n,n+1,2\}=(r+\mathbb{Z}/2n\mathbb{Z})\setminus\{r+1\}$.  Further, the missing element $r+1\in R$ whenever $S\cap \{n+3,n+4,\ldots,2n-1\}\neq \emptyset$. Therefore, except for the single exceptional set $\mathcal{S}=\{n,n+1,2,3,4,\ldots, n-1\}$, we obtain $R=\mathbb{Z}/2n\mathbb{Z}$. Therefore, we have $2^{n-3}-1$ such desired sets.

\item  If $\{0,1,2\}\subset\mathcal{S}$, then $\{r+t+s\pmod{2n}:0\leq t\leq n-1,\ s=0,1,2\}=(r+\mathbb{Z}/2n\mathbb{Z})\setminus\{r+n+2\pmod{2n},\ldots, r+2n-1\pmod{2n}\}$. To get $R=\mathbb{Z}/2n\mathbb{Z}$, we choose sets $\mathcal{S}$ such that
$$\{0,1,2,n+3,4\}\subset \mathcal{S},\quad \{0,1,2,3,n+4,5\}\subset \mathcal{S} , \quad \{0,1,2,n+3,n+4,5\}\subset \mathcal{S}.$$
The total number of such sets is $2^{n-4}$.

\item If $\{n,n+1,n+2\}\subset\mathcal{S}$, then $\{r+t+s\pmod{2n}:0\leq t\leq n-1,\ s=n,n+1,n+2\}=(r+\mathbb{Z}/2n\mathbb{Z})\setminus\{r+2,\ldots, r+n-1\}$. To get $R=\mathbb{Z}/2n\mathbb{Z}$, we choose sets $\mathcal{S}$ such that
$$\{n,n+1,n+2,3,n+4\}\subset\mathcal{S},\quad \{n,n+1,n+2,n+3,4,n+5\}\subset \mathcal{S}$$ and $$\{n,n+1,n+2,3,4,n+5\}\subset \mathcal{S}.$$
The total number of such sets is $2^{n-4}$. 
\end{enumerate}
Therefore the number of subsets $\mathcal{S}$ for which $R=\mathbb{Z}/2n\mathbb{Z}$ is at least $7\cdot 2^{n-3}-4$.
\end{proof}
\section{Proof of Theorem \ref{Theorem1.0}}\label{section-3}
\begin{proof}[Proof of Theorem \ref{Theorem1.0}]
The subsets of $\mathrm{Dic_{4n}}$ of size $2$ are precisely the sets
$$\begin{aligned}
A&=\{a^i,a^j\}~~\text{where}~~0\leq i<j\leq 2n-1,\\
B&=\{a^i,a^jb\}~~\text{where}~~0\leq i,j\leq 2n-1,\\
C&=\{a^ib,a^jb\}~~\text{where}~~0\leq i<j\leq 2n-1.\end{aligned}$$

\textbf{Case 1}($A=\{a^i,a^j\}$). Here we have $$A+A=\{a^{2i},a^{i+j},a^{2j}\},$$
$$A-A=\{1,a^{2n+i-j},a^{2n+j-i}\}.$$
It is straightforward to verify that collisions in $A+A\text{ and }A-A$ occur precisely when $i\equiv j \pmod{n}$, that is, when $(i,j)\in \{(i,n+i): 0\leq i\leq n-1\}$, for such $(i,j)$ we have $|A+A|=|A-A|=2$, otherwise $|A+A|=|A-A|=3$. Therefore, we get
$$\mathrm{S_{\mathrm{Dic_{4n}}}(2,A)}=0,\quad \mathrm{D_{\mathrm{Dic_{4n}}}(2,A)}=0,\quad \mathrm{B_{\mathrm{Dic_{4n}}}(2,A)}=\sum_{i=0}^{2n-2}(2n-1-i)=n(2n-1).$$
\textbf{Case 2}($B=\{a^i,a^jb\}$). In this case, we get 

$$B+B=\{a^{2i},a^{i+j}b,a^{j-i}b,a^n\}$$
$$B-B=\{1,a^{i+j+n}b,a^{i+j}b\}.$$
Collisions in $B+B$ occur precisely when   $$2i\equiv n \pmod{2n} \text{ or }i+j\equiv j-i \pmod{2n},$$ that is, when $2i\equiv n \pmod{2n} \text{ or }2i\equiv 0\pmod{2n}$. If no collisions occur we get MSTD sets. \par
\textbf{Subcase 2.1}($n$ odd).
If $n$ is odd, then $2i\equiv n \pmod{2n}$ has no solution, while $2i\equiv 0 \pmod{2n}$ is satisfied for $i=0,n$. Thus, for  $(i,j)\in \{(0,j),(n,j): 0\leq j\leq 2n-1\}$, we have $$|B+B|=|B-B|=3.$$ Therefore, we have
$$\mathrm{S_{\mathrm{Dic_{4n}}}(2,B)}=4n^2-4n=4n(n-1),\quad \mathrm{D_{\mathrm{Dic_{4n}}}(2,B)}=0,\quad \mathrm{B_{\mathrm{Dic_{4n}}}(2,B)}=4n.$$\par
\textbf{Subcase 2.2}($n$ even).
If $n$ is even then $2i\equiv n \pmod{2n}$ holds for  $i=n/2,3n/2$ and $2i\equiv 0 \pmod{2n}$ holds for $i=0,n$. Thus, for  $(i,j)\in \{(0,j),(n/2,j),(n,j),(3n/2,j): 0\leq j\leq 2n-1\}$, we obtain $|B+B|=|B-B|=3$. Therefore, we get 
$$\mathrm{S_{\mathrm{Dic_{4n}}}(2,B)}=4n^2-8n=4n(n-2),\quad \mathrm{D_{\mathrm{Dic_{4n}}}(2,B)}=0,\quad \mathrm{B_{\mathrm{Dic_{4n}}}(2,B)}=8n.$$

\noindent\textbf{Case 3}$(C=\{a^ib,a^jb\})$. In this case, we have 
$$C+C=\{a^{n},a^{n+i-j},a^{n+j-i}\}$$ 
$$C-C=\{1,a^{i-j},a^{j-i}\}.$$ Any collision in  $|C+C|$ necessarily produces a corresponding collision in $|C-C|$, and hence 
$|C-C|=|C+C|$. Therefore, we get 
$$\mathrm{S_{\mathrm{Dic_{4n}}}(2,C)}=0,\quad \mathrm{D_{\mathrm{Dic_{4n}}}(2,C)}=0,\quad \mathrm{B_{\mathrm{Dic_{4n}}}(2,C)}=\sum_{i=0}^{2n-2}(2n-1-i)=n(2n-1).$$
The proof follows from Cases 1, 2, and 3.
\end{proof}
\section{Proof of Theorem \ref{Theorem1.1} and Corollary \ref{Corollary1.2}} \label{section-4}
The proof of Theorem \ref{Theorem1.1} requires the following auxiliary lemmas.

\begin{lemma}\label{Lemma1.4}
Let $n\geq3$ be an odd integer and let 
$$A=\{a^{i},a^{j},a^{k}\}\subseteq \{1,a,a^{2},\ldots,a^{2n-1},\, b,ab,a^{2}b,\ldots,a^{2n-1}b\},$$ where $0 \leq i<j< k\leq 2n-1$. Then the number of sets of type $A$ which are MSTD, MDTS, and balanced is given as follows.
\begin{enumerate}
    \item[\upshape(1)] If $3\nmid n$, then
$$\mathrm{S_{\mathrm{Dic_{4n}}}(3,A)}=0,\qquad\mathrm{D_{\mathrm{Dic_{4n}}}(3,A)}=\frac{4n(n-1)(n-2)}{3},\qquad\mathrm{B_{\mathrm{Dic_{4n}}}(3,A)}=2n(n-1).$$
\item[\upshape(2)] If $3\mid n$, then
$$ \mathrm{S_{\mathrm{Dic_{4n}}}(3,A)}=0,\qquad \mathrm{D_{\mathrm{Dic_{4n}}}(3,A)}=\frac{4n(n^{2}-3n+3)}{3},\qquad\mathrm{B_{\mathrm{Dic_{4n}}}(3,A)}=\frac{2n(3n-5)}{3}.$$
\end{enumerate}
\end{lemma}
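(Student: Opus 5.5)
The plan is to reduce everything to the cyclic subgroup $\langle a\rangle\cong\mathbb{Z}/2n\mathbb{Z}$ and prove a dichotomy: every set of type $A$ is either balanced or MDTS, and it is balanced exactly when its exponent triple $(i,j,k)$ lies in the set $\mathcal{T}_n$ of Lemma \ref{lem:triples-mod2n}. Once this is shown, $\mathrm{S}_{\mathrm{Dic}_{4n}}(3,A)=0$, $\mathrm{B}_{\mathrm{Dic}_{4n}}(3,A)=|\mathcal{T}_n|$ and $\mathrm{D}_{\mathrm{Dic}_{4n}}(3,A)=\binom{2n}{3}-|\mathcal{T}_n|$. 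The formulas then follow from Lemma \ref{lem:triples-mod2n}. For example, when $3\nmid n$,
$$\frac{2n(2n-1)(n-1)}{3}-2n(n-1)=\frac{4n(n-1)(n-2)}{3},$$
and the case $3\mid n$ is the same computation with $|\mathcal{T}_n|=\frac{2n(3n-5)}{3}$.

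Working additively with exponents modulo $2n$, we have
$$A+A=\{2i,2j,2k,i+j,i+k,j+k\},\qquad A-A=\{0,\pm(j-i),\pm(k-i),\pm(k-j)\}.$$
First list all possible coincidences. Since $i,j,k$ are distinct, equalities such as $i+j\equiv i+k$ or $2i\equiv i+j$ are impossible. So the only coincidences in $A+A$ are $2x\equiv 2y$, which means $x-y\equiv n$, and the three-term-progression congruences $2i\equiv j+k$, $2j\equiv i+k$, $2k\equiv i+j$. Similarly, in $A-A$ a nonzero difference $d$ satisfies $d\equiv -d$ exactly when $d\equiv n$. Apart from that, equalities between differences of different pairs reduce to the same three progression congruences. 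For instance, $j-i\equiv k-j$ is $2j\equiv i+k$, and $j-i\equiv -(k-i)$ is $2i\equiv j+k$.

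Next, rule out overlaps between the two kinds of coincidence, using that $n$ is odd.
\begin{itemize}
\item At most one pair can have difference $n$, since two such pairs would force the third difference to be $0$.
\item A difference $n$ cannot coexist with a progression congruence. For example, $2j\equiv i+k$ together with $j-i\equiv n$ gives $k\equiv i$. Likewise $2j\equiv i+k$ with $k-i\equiv n$ would give $2(j-i)\equiv n$, which is impossible for odd $n$. The remaining combinations are handled the same way.
\end{itemize}

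Now compute the sizes in each case.
\begin{itemize}
\item Generic triples: $|A+A|=6<7=|A-A|$, so $A$ is MDTS.
\item A difference equal to $n$ (and no progression congruence): exactly one sum collapses and the pair $\pm n$ collapses, giving $|A+A|=5<6=|A-A|$. Again MDTS.
\item A single progression congruence (the only possibility if $3\nmid n$): one sum collapses, while two pairs of differences collapse, e.g.\ $j-i\equiv k-j$ and its negative. This gives $5=5$, so $A$ is balanced.
\item When $3\mid n$ and $n=3m$: the triples $(i,i+2m,i+4m)$ satisfy all three congruences. Here $A+A=\{2i,2i+2m,2i+4m\}$ and $A-A=\{0,\pm 2m\}$, so both have size $3$ and $A$ is balanced.
\end{itemize}
Hence the balanced triples are exactly those in $\mathcal{T}_n$, and all others are MDTS.

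The main obstacle is making the coincidence analysis exhaustive. One has to be sure that no further equalities exist, and that each progression congruence removes exactly two elements from $A-A$ but only one from $A+A$. For this I would simply tabulate all $\binom{6}{2}$ potential equalities among sums and all equalities among the six nonzero differences, reducing each to one of the conditions above.
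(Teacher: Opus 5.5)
Your proposal is correct and follows essentially the same route as the paper. Both arguments sort the collisions into the three progression congruences (which give balanced sets, $5=5$ or $3=3$) and the three conditions ``two exponents differ by $n$'' (which give MDTS sets, $5<6$). The balanced sets are then counted as $|\mathcal{T}_n|$ via Lemma \ref{lem:triples-mod2n}, and $\mathrm{D}_{\mathrm{Dic}_{4n}}(3,A)$ is obtained by subtracting this from $\binom{2n}{3}$.

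Your version is also more careful at one point. You check explicitly that, for odd $n$, a difference of $n$ cannot occur together with a progression congruence, and that either exactly one or all three congruences hold. The paper leaves this implicit, and it is exactly what makes the balanced/MDTS split exhaustive and disjoint.
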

\begin{proof}
 Let $A=\{a^{i},a^{j},a^{k}\}$ with $0 \leq i<j< k\leq 2n-1$. Then
$$A+A=\{a^{2i},a^{i+j},a^{i+k},a^{2j},a^{2k},a^{j+k}\},$$ 
$$A-A=\{1,a^{i-j},a^{i-k},a^{j-i},a^{j-k},a^{k-i},a^{k-j}\}.$$ Thus, in the absence of any collisions, we have $|A+A|<|A-A|$. Hence, any reduction in these sizes can occur only when there are collisions among the elements of $A+A$ or $A-A$.
A direct computation shows that such collisions in $A+A\text{ and }A-A$ occur precisely in the following cases:
\begin{align*}
&\text{(i)}\; 2i \equiv j+k \pmod{2n}, \quad
&\text{(ii)}\; 2j \equiv i+k \pmod{2n}, \quad
&\text{(iii)}\; 2k \equiv i+j \pmod{2n},\\
&\text{(iv)}\; i \equiv j \pmod{n}, \qquad
&\text{(v)}\; i \equiv k \pmod{n}, \qquad
&\text{(vi)}\; j \equiv k \pmod{n}. 
\end{align*}
If $2i \equiv j+k\pmod{2n}$, then $$A+A=\{a^{2i},a^{i+j},a^{i+k},a^{2j},a^{2k}\} \text{ and } A-A=\{1,a^{i-j},a^{i-k},a^{j-k},a^{k-j}\},$$ so that $|A+A|=|A-A|$. Additional collisions, such as  $i+j\equiv 2k$ or $i+k\equiv 2j$ or both, may further occur, but in each such scenario the set $A$  remains balanced. The same conclusion holds for the cases $2j\equiv i+k \pmod{2n}$ and $2k \equiv i+j \pmod{2n}$. Therefore, $A$ is balanced in the first three cases.  The number of balanced sets arising from these three cases can be obtained directly from Lemma \ref{lem:triples-mod2n}. Consequently
$$ \mathrm{B_{\mathrm{Dic_{4n}}}(3,A)}=\begin{cases}
2n(n-1), & \text{if } 3\nmid n,\\[6pt]
\frac{2n(3n-5)}{3}, & \text{if } 3\mid n.
\end{cases}.$$
Now consider the case $ i \equiv j \pmod{n}$. Then $$A+A=\{a^{2i},a^{2i+n},a^{i+k},a^{2k},a^{n+i+k}\}, \qquad A-A=\{1,a^{n},a^{i-k},a^{n+i-k},a^{k-i},a^{k-n-i}\}.$$  Thus $|A+A|=5$ and $|A-A|=6$, so  $A$ is an MDTS set. In particular, no MSTD or balanced set occurs in this case. Similarly, for the cases $ i \equiv k \pmod{n}$ and $ j \equiv k \pmod{n}$, no MSTD or balanced set occurs. Therefore, we obtain 
$$\qquad \mathrm{S_{\mathrm{Dic_{4n}}}(3,A)}=0$$ 
and  $$\mathrm{D_{\mathrm{Dic_{4n}}}(3,A)}=\binom{2n}{3}-\mathrm{S_{\mathrm{Dic_{4n}}}(3,A)}-\mathrm{B_{\mathrm{Dic_{4n}}}(3,A)}=\begin{cases}
\frac{2n(n-1)(2n-1)}{3}-2n(n-1), & \text{if } 3\nmid n,\\[6pt]
\frac{2n(n-1)(2n-1)}{3}-\frac{2n(3n-5)}{3}, & \text{if } 3\mid n.
\end{cases}$$

This completes the proof.
\end{proof}
\begin{lemma}\label{Lemma1.5}
Let $n\geq3$ be an odd integer and let 
$$B=\{a^i,a^j,a^kb\}\subset \{1,a,a^{2},\ldots,a^{2n-1},\, b,ab,a^{2}b,\ldots,a^{2n-1}b\},$$ where $0\leq i<j\leq 2n-1,~0\leq k\leq 2n-1$. Then the number of sets of type $B$ that are MSTD, MDTS, and balanced is given as follows.
$$\mathrm{S_{\mathrm{Dic_{4n}}}(3,B)}=2n(n-1)(2n-5),\qquad\mathrm{D_{\mathrm{Dic_{4n}}}(3,B)}=2n(n-1),\qquad\mathrm{B_{\mathrm{Dic_{4n}}}(3,B)}=2n(5n-4).$$
\end{lemma}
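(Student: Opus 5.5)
Since $(a^kb)^{-1}=a^{n+k}b$, I would compute $B+B$ and $B-B$ explicitly from the relations $ba^{m}=a^{-m}b$ and $a^kba^kb=a^n$:
$$B+B=\{a^{2i},a^{i+j},a^{2j},a^{n}\}\cup\{a^{i+k}b,a^{j+k}b,a^{k-i}b,a^{k-j}b\},$$
$$B-B=\{1,a^{i-j},a^{j-i}\}\cup\{a^{i+k}b,a^{j+k}b,a^{i+k+n}b,a^{j+k+n}b\}.$$
With no coincidences, $|B+B|=8>7=|B-B|$, so a generic set of type $B$ is MSTD. The key structural observation is that every possible coincidence in either list depends only on $(i,j)$. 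For each bad pair the coincidences are the same for all $k$, and each $k$ gives a distinct set. So every count will be $2n$ times a count of unordered pairs $0\le i<j\le 2n-1$. There are $\binom{2n}{2}=n(2n-1)$ such pairs, and the target is
$$(n-1)(2n-5)\ \text{MSTD},\qquad n-1\ \text{MDTS},\qquad 5n-4\ \text{balanced pairs},$$
which indeed sums to $n(2n-1)$.

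Next I would list the coincidence conditions, using that $n$ is odd. Then $2i\not\equiv n \pmod{2n}$, so $a^n$ never coincides with $a^{2i}$ or $a^{2j}$.
\begin{itemize}
\item For sums, the conditions are: (a) $j=i+n$, giving $2i\equiv 2j$; (b) $i+j\equiv n$, giving $a^{i+j}=a^n$; (c) $i+j\equiv 0$, giving $a^{i+k}b=a^{k-j}b$ and $a^{j+k}b=a^{k-i}b$ simultaneously; (d) $i\in\{0,n\}$, giving $a^{i+k}b=a^{k-i}b$; (e) $j\in\{0,n\}$, the analogous collision for $j$.
\item For differences, the only coincidence is under (a): $a^{i-j}=a^{j-i}=a^n$, and the flip part collapses to $\{a^{i+k}b,a^{i+k+n}b\}$. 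So $|B-B|=4$ under (a) and $|B-B|=7$ otherwise.
\end{itemize}
Thus $B$ is MSTD exactly when $|B+B|\ge 8$ fails to drop below $8$, i.e. none of (a)–(e) holds. Outside (a), $B$ is balanced when exactly one sum coincidence occurs and MDTS when two or more occur. Under (a), it remains to compute $|B+B|$ directly.

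The remaining work is casework on pairs and their overlaps.
\begin{itemize}
\item Case (a), $j=i+n$ with $0\le i\le n-1$ ($n$ pairs). The rotation part of $B+B$ is $\{a^{2i},a^{2i+n},a^n\}$, which has size $2$ when $i=0$ and $3$ otherwise. The flip part is $\{a^{i+k}b,a^{k-i}b,a^{i+k+n}b,a^{k-i+n}b\}$, which has size $2$ when $i=0$ and $4$ otherwise (for $i\ne 0$, $2i\not\equiv 0,n$). So $i=0$ gives $4$ versus $4$, balanced, while the other $n-1$ pairs give $7>4$, MSTD. Note that (b) cannot also hold here, since $2i+n\equiv n$ would force $2i\equiv 0$.
\item Outside (a), I expect each of (b) and (c) to contribute about $n$ pairs and (d), (e) about $4n$ pairs together.
\item The overlaps to separate out include $\{0,n\}$ (which lies in (a)), pairs with $i=0$ or $j=n$ that also satisfy (b) or (c), e.g. $(0,n)$ itself, and similar boundary cases.
\end{itemize}
I would tabulate $|B+B|$ for each overlap pattern and verify that MDTS occurs exactly for $n-1$ pairs. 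A plausible source is the pairs $\{i,-i\}$ with $i\ne 0,n$: (c) removes two flips, leaving size $6<7$. This is an expected count, not yet checked; however, the pairs $\{i,-i\}$ with $i\ne 0,n$ number exactly $n-1$, consistent with the target.

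The main obstacle will be getting the inclusion–exclusion exactly right near the boundary values $i,j\in\{0,n\}$ and the interactions among (b), (c), (d) and (e). I would handle this by splitting first on whether $\{i,j\}\cap\{0,n\}$ is empty, has one element, or equals $\{0,n\}$. Within each piece I would count the solutions of (b) and (c), then check the totals $5n-4$ and $n-1$ against the small values in Tables \ref{table 1} and \ref{table 2} (e.g.\ $n=3,5$) before multiplying by $2n$.
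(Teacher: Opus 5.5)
Your framework is the same as the paper's, and what you carry out is correct. The formulas for $B+B$ and $B-B$, the collision list, the observation that no condition involves $k$ (so each count is $2n$ times a count of pairs $i<j$), and your analysis of case (a) all match the paper's Case 1: the pair $(0,n)$ is balanced and the other $n-1$ pairs are MSTD.

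The gap is that everything outside (a), which is where the lemma's numbers come from, is left as an expectation. You note that $n-1$ MDTS pairs and $5n-4$ balanced pairs would be consistent with the target, but that is circular. A check against Tables \ref{table 1} and \ref{table 2} cannot replace it, since those tables aggregate all four types of $3$-subsets and do not isolate type $B$. Your sentence ``MSTD exactly when none of (a)--(e) holds'' is also contradicted by your own case (a) computation.

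The missing step is short, and the inclusion--exclusion you anticipate does not arise. Outside (a) you have $|B-B|=7$, and the conditions (b)--(e) are pairwise incompatible:
\begin{itemize}
\item (b) and (c) together would give $n\equiv 0\pmod{2n}$;
\item (c) together with (d) or (e) gives $j\equiv -i\equiv i$;
\item (d) and (e) together, or (b) together with $i=0$ or $j=n$, force $(i,j)=(0,n)$, which lies in (a);
\item (b) with $i=n$ forces $j\equiv 0\pmod{2n}$, impossible for $n<j\le 2n-1$.
\end{itemize}
Hence outside (a), $|B+B|$ equals $8$, $7$ or $6$ according as none of (b)--(e) holds, exactly one of (b), (d), (e) holds, or (c) holds. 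So the cases simply add.

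The counts are then immediate:
\begin{itemize}
\item (b) means $i+j\in\{n,3n\}$, giving $\tfrac{n-1}{2}+\tfrac{n-1}{2}=n-1$ pairs once $(0,n)$ is removed;
\item (c) means $i+j=2n$ with $1\le i\le n-1$, giving $n-1$ pairs, none in (a) because $n$ is odd;
\item (d) gives $(2n-2)+(n-1)=3n-3$ pairs;
\item (e) gives $n-1$ pairs.
\end{itemize}
Thus there are $n-1$ MDTS pairs, $(n-1)+(3n-3)+(n-1)+1=5n-4$ balanced pairs, and $n(2n-1)-(n-1)-(5n-4)=(n-1)(2n-5)$ MSTD pairs. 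Multiplying by $2n$ gives the lemma.

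With this added, your argument is exactly the paper's Cases 1--5, with the disjointness of those cases (which the paper uses tacitly) made explicit.
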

\begin{proof}
Let $B=\{a^i,a^j,a^kb\}$ with $0\leq i<j\leq 2n-1 \text{ and }~0\leq k\leq 2n-1$. Then 
$$B+B=\{a^{2i},a^{i+j},a^{2j},a^n\}\cup\{a^{i+k}b,a^{j+k}b,a^{k-i}b,a^{k-j}b\}$$  
$$B-B=\{1,a^{i-j},a^{j-i}\}\cup \{a^{n+i+k}b,a^{n+j+k}b,a^{k+i}b,a^{k+j}b\}.$$ 
Thus, in the absence of any collisions, we have $|B+B|>|B-B|$.
It is straightforward to verify that collisions in $B+B \text{ and }B-B$ occur precisely in the following cases:
\begin{align*}
&\text{(1)}\; i \equiv j \pmod{n}, \quad
&\text{(2)}\; i+j \equiv n \pmod{2n}, \quad
&\text{(3)}\; 2i \equiv 0 \pmod{2n},\\
&\text{(4)}\; i+j \equiv 0 \pmod{2n}, \quad
&\text{(5)}\; 2j \equiv 0 \pmod{2n}, \quad
&\text{(6)}\; n+i \equiv j \pmod{2n},\\
&\text{(7)}\; n+j \equiv i \pmod{2n}.
\end{align*}
Cases 6 and 7 are equivalent to Case 1. Note that the collisions do not depend on the value of $0\leq k\leq 2n-1$. Let $B_{r}$ denote the subset type corresponding to Case $r$, for $r\in \{1,2,3,4,5\}$.   We examine each case individually.\par
\textbf{Case 1}($i \equiv j \pmod n$).
In this case, we necessarily have $(i,j)\in \{(i,i+n): 0\leq i\leq n-1\}$. Consider subsets of the form
$$B_1=\{a^{i},a^{i+n},a^{k}b\}.$$

We first determine when $B_1$ is balanced. This occurs only in the case $(i,j)=(0,n)$. Indeed, when $i=0$, we obtain
$$B_1+B_1=\{1,a^{n}\}\cup\{a^{k}b,a^{n+k}b\},$$
$$B_1-B_1=\{1,a^{n}\}\cup\{a^{n+k}b,a^{k}b\}.$$
Thus $|B_1+B_1|=|B_1-B_1|$, and hence $B_1$ is a balanced set. Since $k$ ranges over $0\le k\le 2n-1$, this yields exactly $2n$ balanced sets.

Now assume $(i,j)\in \{(i,i+n): 1\leq i\leq n-1\}$. In this case,
$$B_1+B_1=\{a^{2i},a^{2i+n},a^{n}\}\cup\{a^{i+k}b,a^{i+n+k}b,a^{k-i}b,a^{k-i-n}b\},$$
whereas
$$B_1-B_1=\{1,a^{n}\}\cup\{a^{n+i+k}b,a^{i+k}b\}.
$$
Consequently, $|B_1+B_1|>|B_1-B_1|$, and thus $B_1$ is an MSTD set. Therefore, no MDTS sets arise in this case.

Finally, for each $i$ with $1\le i\le n-1$ and each $k$ with $0\le k\le 2n-1$, the corresponding set $B_1$ is MSTD. Hence the total number of MSTD sets in this case is
$$\sum_{i=1}^{n-1}2n=2n(n-1).$$
Therefore, in this case, we have 
\begin{equation}\label{eq:case1-counts}
\mathrm{S}_{\mathrm{Dic}_{4n}}(3,B_1)=2n(n-1), \qquad
\mathrm{D}_{\mathrm{Dic}_{4n}}(3,B_1)=0, \qquad
\mathrm{B}_{\mathrm{Dic}_{4n}}(3,B_1)=2n.
\end{equation}
\textbf{Case 2 ($i+j\equiv n \pmod{2n}$).} This implies 
$$(i,j)\in\{(i,n-i): 0\le i\le \frac{n-1}{2}\},$$
and $$(i,j)\in \{(i,3n-i) :n+1\le i\le \frac{3n-1}{2}\}.$$
Since the case $(i,j)=(0,n)$ has already been treated in Case~1, we exclude it from further consideration. For $(i,j)\in\{(i,n-i): 1\le i\le \frac{n-1}{2}\}\cup \{(i,3n-i) :n+1\le i\le \frac{3n-1}{2}\}$, we get 
$$B_2+B_2=\{a^{2i},a^{n},a^{2n-2i}\}\cup
\{a^{i+k}b,a^{n-i+k}b,a^{k-i}b,a^{k-n+i}b\},
$$
and 
$$B_2-B_2=\{1,a^{2i-n},a^{n-2i}\}\cup
\{a^{n+i+k}b,a^{k-i}b,a^{k+i}b,a^{k+n-i}b\}.
$$ In this case we have $|B_2+B_2|=|B_2-B_2|$, and hence every such set $B_2$ is balanced.  Therefore, the total number of balanced sets for each $0\leq k \leq 2n-1$ is
$$\sum_{i=1}^{\frac{n-1}{2}}2n+\sum_{i=n+1}^{\frac{3n-1}{2}}2n
=2n(n-1).
$$
Thus, in this case, we get 
\begin{equation}\label{eq:case2-counts}
\mathrm{S}_{\mathrm{Dic}_{4n}}(3,B_2)=0, \qquad
\mathrm{D}_{\mathrm{Dic}_{4n}}(3,B_2)=0, \qquad
\mathrm{B}_{\mathrm{Dic}_{4n}}(3,B_2)=2n(n-1).
\end{equation}
\textbf{Case 3}$(2i\equiv 0 \pmod{2n})$. 
In this case, $i\in \{0,n\}$. Hence  $$(i,j)\in \{(0,j):1\leq j\leq 2n-1\},$$ or  $$(i,j)\in\{(n,j): n+1\leq j\leq 2n-1\}.$$ Since the case $(i,j)=(0,n)$ has already been analyzed in Case~1,we exclude it from further consideration.  
If $(i,j)\in \{(0,j):1\leq j\leq 2n-1,j\neq n\},$ then  
$$B_{3}+B_{3}=\{1,a^{j},a^{2j},a^n\}\cup\{a^{k}b,a^{j+k}b,a^{k-j}b\},$$ 
$$B_{3}-B_{3}=\{1,a^{2n-j},a^{j}\}\cup \{a^{n+k}b,a^{n+j+k}b,a^{k}b,a^{k+j}b\}.$$
If $(i,j)\in\{(n,j): n+1\leq j\leq 2n-1\}$, then
$$B_{3}+B_{3}=\{1,a^{n+j},a^{2j},a^n\}\cup\{a^{n+k}b,a^{j+k}b,a^{k-j}b\}$$ 
$$B_{3}-B_{3}=\{1,a^{n-j},a^{j-n}\}\cup \{a^{k}b,a^{n+j+k}b,a^{k+n}b,a^{k+j}b\}.$$
In both cases, $|B_3+B_3|=|B_3-B_3|$, and hence every such set $B_3$ is balanced. Therefore, no MSTD or MDTS sets arise in this case. 
For each admissible value of $j$ and for each $k$ with $0\le k\le 2n-1$, we obtain a balanced set. Thus, the total number of balanced sets is
$$
\sum_{\substack{j=1 \\ j\neq n}}^{2n-1}2n
+\sum_{j=n+1}^{2n-1}2n
=6n(n-1).
$$
Therefore, in this case, we obtain
\begin{equation}\label{eq:case3-counts}
\mathrm{S}_{\mathrm{Dic}_{4n}}(3,B_3)=0, \qquad 
\mathrm{D}_{\mathrm{Dic}_{4n}}(3,B_3)=0, \qquad 
\mathrm{B}_{\mathrm{Dic}_{4n}}(3,B_3)=6n(n-1).
\end{equation}\par
\textbf{Case 4}$(i+j\equiv 0 \pmod{2n})$. In this case, we have  $(i,j)\in \{(i,2n-i): 1\leq i\leq n-1\}$. For such pairs $(i,j)$,  we compute 
$$B_{4}+B_{4}=\{1,a^{2i},a^{2n-2i},a^n\}\cup\{a^{i+k}b,a^{k-i}b\},$$ 
$$B_{4}-B_{4}=\{1,a^{2i},a^{2n-2i}\}\cup \{a^{n+i+k}b,a^{n-i+k}b,a^{k+i}b,a^{k-i}b\}.$$
Here, we have $|B_{4}-B_{4}|>|B_{4}+B_{4}|$, and hence every such set $B_4$ is an MDTS set. Consequently, no MSTD or balanced sets arise in this case. For each $i$ with $1\le i\le n-1$ and each $k$ with $0\le k\le 2n-1$, the total number of MDTS sets is
$$
\sum_{i=1}^{n-1}2n=2n(n-1).
$$ Therefore, we obtain \begin{equation}\label{eq:case4-counts}
\mathrm{S}_{\mathrm{Dic}_{4n}}(3,B_4)=0, \qquad 
\mathrm{D}_{\mathrm{Dic}_{4n}}(3,B_4)=2n(n-1), \qquad 
\mathrm{B}_{\mathrm{Dic}_{4n}}(3,B_4)=0.
\end{equation}\par
\textbf{Case 5}($2j\equiv 0 \pmod{2n}$). In this case, we have  $(i,j)\in\{(i,n):0\leq i \leq n-1\}$. Since the case $(i,j)=(0,n)$ has already been analyzed,  we exclude it from further consideration. For pairs $(i,j)\in\{(i,n):1\leq i \leq n-1\}$, we compute
$$B_5+B_5=\{a^{2i},a^{i+n},1,a^{n}\}\cup
\{a^{i+k}b,a^{n+k}b,a^{k-i}b\},$$
$$B_5-B_5=\{1,a^{i-n},a^{n-i}\}\cup
\{a^{n+i+k}b,a^{k}b,a^{k+i}b,a^{k+n}b\}.$$
Here, we have $|B_5+B_5|=|B_5-B_5|$, and hence every such set $B_5$ is balanced. Consequently, no MSTD or MDTS sets arise in this case. For each $i$ with $1\le i\le n-1$ and for each $k$ with $0\le k\le 2n-1$,  total number of balanced sets is
$$\sum_{i=1}^{n-1}2n=2n(n-1).$$
Therefore, in this case we obtain
\begin{equation}\label{eq:case5-counts}
\mathrm{S}_{\mathrm{Dic}_{4n}}(3,B_5)=0, \qquad 
\mathrm{D}_{\mathrm{Dic}_{4n}}(3,B_5)=0, \qquad 
\mathrm{B}_{\mathrm{Dic}_{4n}}(3,B_5)=2n(n-1).
\end{equation}
Therefore, combining \eqref{eq:case1-counts}, \eqref{eq:case2-counts}, \eqref{eq:case3-counts}, \eqref{eq:case4-counts} and \eqref{eq:case5-counts} we get 
\begin{align*}
\mathrm{D_{\mathrm{Dic_{4n}}}(3,B)}&=\mathrm{D_{\mathrm{Dic_{4n}}}(3,B_1)}+\mathrm{D_{\mathrm{Dic_{4n}}}(3,B_2)}+\mathrm{D_{\mathrm{Dic_{4n}}}(3,B_3)}+\mathrm{D_{\mathrm{Dic_{4n}}}(3,B_4)}+\mathrm{D_{\mathrm{Dic_{4n}}}(3,B_5)}\\&=0+0+0+2n(n-1)+0=2n(n-1),\end{align*}
\begin{align*}
\mathrm{B_{\mathrm{Dic_{4n}}}(3,B)}&=\mathrm{B_{\mathrm{Dic_{4n}}}(3,B_1)}+\mathrm{B_{\mathrm{Dic_{4n}}}(3,B_2)}+\mathrm{B_{\mathrm{Dic_{4n}}}(3,B_3)}+\mathrm{B_{\mathrm{Dic_{4n}}}(3,B_4)}+\mathrm{B_{\mathrm{Dic_{4n}}}(3,B_5)}\\&=2n+2n(n-1)+6n(n-1)+0+2n(n-1)=2n(5n-4).\end{align*}
Since the total number of subsets of type $B$ is $\binom{2n}{2}\times 2n$. Hence, 
\begin{align*}
    \mathrm{S}_{\mathrm{Dic}_{4n}}(3,B)&= \binom{2n}{2}\times 2n- \mathrm{D_{\mathrm{Dic_{4n}}}(3,B)}-\mathrm{B_{\mathrm{Dic_{4n}}}(3,B)}\\&=2n^{2}(2n-1)-2n(n-1)-2n(5n-4)=2n(n-1)(2n-5).
\end{align*}
\end{proof}
\begin{lemma}\label{Lemma1.6}
Let $n\geq3$ be an odd integer and let 
$$C=\{a^i,a^jb,a^kb\}\subset \{1,a,a^{2},\ldots,a^{2n-1},\, b,ab,a^{2}b,\ldots,a^{2n-1}b\},$$ where $0\leq i\leq 2n-1,~0\leq j<k\leq 2n-1$. Then the number of sets of type $C$ that are MSTD, MDTS, and balanced is given as follows.
$$\mathrm{S_{\mathrm{Dic_{4n}}}(3,C)}=2n(n-1)(2n-5),\qquad\mathrm{D_{\mathrm{Dic_{4n}}}(3,C)}=4n(n-1),\qquad\mathrm{B_{\mathrm{Dic_{4n}}}(3,C)}=2n(4n-3).$$
\end{lemma}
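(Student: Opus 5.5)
I would follow the same template as the proofs of Lemma \ref{Lemma1.4} and Lemma \ref{Lemma1.5}: write down $C+C$ and $C-C$ explicitly, list every congruence on $(i,j,k)$ that produces a collision, and then count each collision class separately. Using Lemma 2.6 of \cite{Cheng_Feng_Huang_2019} ($a^{k}ba^{m}b=a^{k-m+n}$, $ba^k=a^{-k}b$, $(a^kb)^{-1}=a^{n+k}b$), I expect
$$C+C=\{a^{2i},a^{n},a^{n+j-k},a^{n+k-j}\}\cup\{a^{i+j}b,a^{i+k}b,a^{j-i}b,a^{k-i}b\},$$
$$C-C=\{1,a^{j-k},a^{k-j}\}\cup\{a^{i+j}b,a^{i+k}b,a^{i+j+n}b,a^{i+k+n}b\}.$$
Generically $|C+C|=8>7=|C-C|$, so MSTD is the default. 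The goal is to isolate the exceptional triples and show they account for exactly $4n(n-1)$ MDTS sets and $2n(4n-3)$ balanced sets. The MSTD count then follows by subtraction from the total $2n\binom{2n}{2}=2n^2(2n-1)$:
$$2n^2(2n-1)-4n(n-1)-2n(4n-3)=2n(n-1)(2n-5).$$

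Next I would list the collisions. Set $d=k-j$, so $1\le d\le 2n-1$. In $C-C$, the rotation part collapses only when $2d\equiv 0$, i.e.\ $d=n$, and the same condition is the only way the reflection part collapses. In $C+C$, the rotation part collides when $d=n$, or $2i\equiv n\pm d\pmod{2n}$; the condition $2i\equiv n$ is impossible since $n$ is odd. The reflection part of $C+C$ collides when $2i\equiv 0$, or $2i\equiv \pm d\pmod{2n}$. So the cases are:
\begin{enumerate}
\item[(a)] $d=n$;
\item[(b)] $i\in\{0,n\}$;
\item[(c)] $2i\equiv d$ or $2i\equiv -d$;
\item[(d)] $2i\equiv n+d$ or $2i\equiv n-d$.
\end{enumerate}
Since $n$ is odd, for each of the $2n$ values of $i$ and each target residue, the equation $d\equiv \pm 2i$ or $d\equiv \pm(2i-n)$ has exactly one solution modulo $2n$. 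This makes each class easy to count once $j$ is free: there are $2n-d$ choices of $j$ for a given $d$, summed appropriately.

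I would then treat the cases in order, always excluding triples already handled, exactly as the case $(i,j)=(0,n)$ was excluded in Lemma \ref{Lemma1.5}. In case (a), $C-C$ has size $4$ and $C+C=\{a^{2i},a^n,1\}\cup\{a^{i+j}b,a^{i+j+n}b,a^{j-i}b,a^{j-i+n}b\}$. This should be MSTD unless $i\in\{0,n\}$, where it becomes balanced, mirroring Case 1 of Lemma \ref{Lemma1.5}. For (b) with $d\ne n$, one reflection collision drops $C+C$ to $7$, giving balanced sets. For (c), one reflection collision should also give balance. In (d), the rotation collision $a^{2i}=a^{n\pm d}$ also gives balance. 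The MDTS sets must come from simultaneous collisions, e.g.\ $2i\equiv d$ together with $2i\equiv n-d$ forces $4i\equiv n$. More likely they come from pairs where (c) and (d) hit at once with $i\in\{0,n\}$ excluded, or from case (b) combined with (c), analogous to Case 4 of Lemma \ref{Lemma1.5}. In that subcase I would recompute both sets explicitly to check which configurations lose two elements of $C+C$.

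The main obstacle is this overlap bookkeeping. I need to determine exactly which combinations of conditions (b)--(d) can hold at once for $d\neq n$, and what $|C+C|$ is then; the targets $4n(n-1)$ and $2n(4n-3)$ pin down how these must come out. I would organize the count by fixing $i$ and enumerating the admissible $d$ values: the values $\{\pm 2i,\, n\pm 2i\}$ are pairwise distinct exactly when $4i\not\equiv 0$ and $4i\not\equiv n$, and generically distinct. For each fixed $i$ and $d$, I would count the $2n-d$ pairs $(j,k)$ and sum. As a final consistency check, I would verify the resulting formulas against the $\mathrm{Dic}_{12}$ and $\mathrm{Dic}_{20}$ data in Tables \ref{table 1} and \ref{table 2}, after combining with Lemmas \ref{Lemma1.4}, \ref{Lemma1.5} and the type with three reflections.
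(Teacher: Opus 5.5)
Your computation of $C+C$ and $C-C$, your list of collision conditions (a)--(d), and your treatment of case (a) are all correct. In case (a), $d=n$ gives MSTD for $i\notin\{0,n\}$ and balanced for $i\in\{0,n\}$, i.e.\ $2n(n-1)$ and $2n$ sets. Counting by $(i,d)$, with $2n-d$ pairs $(j,k)$ for each difference $d$, is also the right plan.

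The proof breaks at case (b). The single congruence $2i\equiv 0 \pmod{2n}$ produces \emph{two} coincidences in the reflection part of $C+C$: $a^{i+j}b=a^{j-i}b$ and $a^{i+k}b=a^{k-i}b$. So for $i\in\{0,n\}$ and $d\neq n$ you get $C+C=\{1,a^{n},a^{n-d},a^{n+d},a^{i+j}b,a^{i+k}b\}$. Then $|C+C|=6<7=|C-C|$, so these sets are MDTS, not balanced. For example, with $n=3$, $C=\{1,b,ab\}$ has $|C+C|=6$ and $|C-C|=7$. There are $2\bigl(\binom{2n}{2}-n\bigr)=4n(n-1)$ such sets, which is exactly the MDTS count in the statement.

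The overlaps where you planned to look for MDTS sets are all empty for odd $n$:
(c) forces $d$ even.
(d) forces $n\pm d$ even, i.e.\ $d$ odd.
(b) with (c) forces $d\equiv 0$, and (b) with (d) forces $d=n$.
So your case analysis as written yields no MDTS sets and $12n(n-1)+2n$ balanced sets. No overlap bookkeeping can reconcile that with the statement; the fix is to recompute case (b).

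With case (b) corrected, the rest goes through along your lines. For each $i\notin\{0,n\}$, the exceptional residues of $d$ are $\pm 2i$ and $n\pm 2i$. These are four distinct residues, none equal to $0$ or $n$. Each of (c) and (d) removes exactly one element of $C+C$, giving $|C+C|=7=|C-C|$, so these sets are balanced. The four residues form two pairs $\{e,2n-e\}$, and each pair contributes $(2n-e)+e=2n$ sets. That gives $4n$ balanced sets for each of the $2n-2$ values of $i$. Adding the $2n$ balanced sets from case (a) gives $8n(n-1)+2n=2n(4n-3)$. Subtracting from the total $2n^{2}(2n-1)$ then gives the MSTD count $2n(n-1)(2n-5)$.
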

\begin{proof}
The proof is similar to Lemma \ref{Lemma1.5}.
\end{proof}
\begin{lemma}\label{Lemma1.7}
Let $n\geq3$ be an odd integer and let 
$$D=\{a^{i}b,a^{j}b,a^{k}b\}\subset \{1,a,a^{2},\ldots,a^{2n-1},\, b,ab,a^{2}b,\ldots,a^{2n-1}b\},$$ where $0 \leq i<j< k\leq 2n-1$. Then the number of sets of type $D$ that are MSTD, MDTS, and balanced is given as follows.
$$\mathrm{S_{\mathrm{Dic_{4n}}}(3,D)}=0,\qquad\mathrm{D_{\mathrm{Dic_{4n}}}(3,D)}=0,\qquad\mathrm{B_{\mathrm{Dic_{4n}}}(3,D)}=\frac{2n(n-1)(2n-1)}{3}.$$
\end{lemma}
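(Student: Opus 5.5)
The plan is to show that for every type-$D$ set $D=\{a^ib,a^jb,a^kb\}$ we have $|D+D|=|D-D|$. Then every such set is balanced, and the count is the total number of such sets,
$$\binom{2n}{3}=\frac{2n(2n-1)(2n-2)}{6}=\frac{2n(n-1)(2n-1)}{3},$$
which gives $\mathrm{S}_{\mathrm{Dic}_{4n}}(3,D)=\mathrm{D}_{\mathrm{Dic}_{4n}}(3,D)=0$ at once. This extends Case 3 in the proof of Theorem \ref{Theorem1.0} from two to three elements.

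The first step is to compute both sets with the Lemma of \cite{Cheng_Feng_Huang_2019}. Part (2) gives $a^xba^yb=a^{x-y+n}$. Part (3) gives $(a^yb)^{-1}=a^{n+y}b$, so
$$a^xb(a^yb)^{-1}=a^xba^{n+y}b=a^{x-(n+y)+n}=a^{x-y}.$$
Hence, with $E=\{i,j,k\}\subset\mathbb{Z}/2n\mathbb{Z}$,
$$D+D=\{a^{n+x-y}:x,y\in E\},\qquad D-D=\{a^{x-y}:x,y\in E\}.$$

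The second step is to observe that $D+D=a^n\cdot(D-D)$. Multiplication by the fixed element $a^n$ is a bijection of the group, so $|D+D|=|D-D|$ for every choice of $i<j<k$. This holds for every $n$, so the hypothesis that $n$ is odd is not actually needed here.

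There is essentially no obstacle. The only point needing care is the sign and inverse bookkeeping in the product $a^xb(a^yb)^{-1}$: one must apply part (3) before part (2) and check that the two shifts by $n$ cancel to give exactly $a^{x-y}$. Once that is verified, the translation argument settles everything, independent of any collisions among the differences $x-y$.
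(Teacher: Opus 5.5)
Your proof is correct and follows the paper's own argument. Both compute $D+D=\{a^{n+x-y}\}$ and $D-D=\{a^{x-y}\}$ over $x,y\in\{i,j,k\}$, conclude that every type-$D$ set is balanced, and so obtain $\binom{2n}{3}=\frac{2n(n-1)(2n-1)}{3}$; your identity $D+D=a^n\cdot(D-D)$ simply makes precise the paper's remark that any collision in $D+D$ forces one in $D-D$, and you are right that oddness of $n$ is not needed.
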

\begin{proof}
 Let $D=\{a^{i}b,a^{j}b,a^{k}b\}$, where $0 \leq i<j< k\leq 2n-1$. Then
$$D+D=\{a^{n},a^{n+i-j},a^{n+i-k},a^{n+j-i},a^{n+j-k},a^{n+k-i},a^{n+k-j}\},$$ 
$$D-D=\{1,a^{i-j},a^{i-k},a^{j-i},a^{j-k},a^{k-i},a^{k-j}\}.$$
Any overlap among the elements of $|D+D|$ leads to an overlap in  $|D-D|$, and hence
$|D-D|=|D+D|.$ Therefore, every such set $D$ is balanced.
\end{proof}
\begin{proof}[Proof of Theorem \ref{Theorem1.1}]
The subsets of $\mathrm{Dic}_{4n}$ of size $3$ are precisely the sets of the following four types:
$$
\begin{aligned}
A&=\{a^i,a^j,a^k\}, && 0\le i<j<k\le 2n-1,\\
B&=\{a^i,a^j,a^kb\}, && 0\le i<j\le 2n-1,\; 0\le k\le 2n-1,\\
C&=\{a^{i},a^jb,a^kb\}, && 0\le i\le 2n-1,\; 0\le j<k\le 2n-1,\\
D&=\{a^ib,a^jb,a^kb\}, && 0\le i<j<k\le 2n-1.
\end{aligned}
$$
The result now follows directly from Lemmas \ref{Lemma1.4}, \ref{Lemma1.5}, \ref{Lemma1.6}, and \ref{Lemma1.7}, which treat each of these cases.
\end{proof}

\begin{proof}[Proof of Corollary \ref{Corollary1.2}]
The proof is an immediate consequence of Theorem \ref{Theorem1.1}.
\end{proof}
\section{Proof of Theorems \ref{Theorem1.7} and \ref{Theorem 1.9}}\label{section-5}
\begin{proof}[Proof of Theorem \ref{Theorem1.7}]
Let $A=R \cup Sb$ be a subset of $\mathrm{Dic_{4n}}$. Then 
$$A+A=(R+R)\cup (R+Sb)\cup (Sb+R)\cup (Sb+Sb),$$
$$A-A=(R-R)\cup (R-Sb)\cup (Sb-R)\cup (Sb-Sb).$$
Since  $R=\{a^{r},a^{r+1},\ldots,a^{r+n-1}\} \text{ with } 0\leq r\leq n,$  and $S=\{a^{i_1},\ldots, a^{i_n}\}\subset \mathcal{R}_0, \text{ with } 0\leq i_1,i_2,\ldots, i_n\leq 2n-1$, we obtain   
\begin{align*}
R+R&=\mathcal{R}_{2r}\setminus\{a^{2r+2n-1}\}, \text{ and } R-R=\mathcal{R}_0\setminus\{a^{n}\},
\\
R+Sb&=\{a^{r+t+i_j}b:0\leq t\leq n-1,\ 1\leq j\leq n\},\\
Sb+R&=\{a^{i_j-r-t}b:0\leq t\leq n-1,\ 1\leq j\leq n\},
\\
R-Sb&=\{a^{r+t+n+i_j}b:0\leq t\leq n-1,\ 1\leq j\leq n\},
\\
Sb-R&=\{a^{i_j+r+t}b:0\leq t\leq n-1,\ 1\leq j\leq n\},
\\
Sb+Sb&=\{a^{i_j-i_k+n}:1\leq j,k\leq n\},
\\
Sb-Sb&=\{a^{i_j-i_k}:1\leq j,k\leq n\}.
\end{align*}
It is easy to observe that 
\begin{equation}\label{thm-eq-1}
    (R-Sb)\cup (Sb-R)=\mathcal{F}_{2n}.
\end{equation}
\noindent\textbf{Claim 1:} $\mathrm{S}_{\mathrm{Dic}_{4n}}(2n,A)\geq (n+1)\bigl(7\cdot 2^{n-3}-4\bigr).$\par
To prove the claim, we require the sets $A$ for which $|A+A|>|A-A|$, that is, $$|(R+R)\cup (Sb+Sb)\cup (R+Sb)\cup (Sb+R)|>|(R-R)\cup (Sb-Sb)\cup (R-Sb)\cup (Sb-R)|.$$ By \eqref{thm-eq-1}, we have $$  |(R-Sb)\cup (Sb-R)|=2n.$$ Therefore, it is enough to show that
\begin{align}\label{(5.2)}
 |(R+R)\cup (Sb+Sb)|>|(R-R)\cup (Sb-Sb)|   
\end{align}
and 
\begin{align}\label{(5.3)}
|(R+Sb)\cup (Sb+R)|=2n.    
\end{align}
Since $$R+R=\mathcal{R}_{2r}\setminus\{a^{2r+2n-1}\}, \text{ and } R-R=\mathcal{R}_0\setminus\{a^{n}\},$$ 
to show \eqref{(5.2)}, it suffices to show that
$$ a^{2r+2n-1}\in Sb+Sb=\{a^{i_j-i_k+n}:1\leq j,k\leq n\}$$ and $$a^{n}\notin Sb-Sb=\{a^{i_j-i_k}:1\leq j,k\leq n\}.$$
This is equivalent to showing that for fixed $0\leq r\leq n$, the elements in $\mathcal{S}=\{i_1,i_2,\ldots,i_n\}\subset  \mathbb{Z}/2n\mathbb{Z}$ satisfy
\begin{align}
i_j-i_k\equiv 2r+n-1\pmod{2n}~\text{ for some } 1\leq j,k\leq n, \label{5.2}\\
i_j-i_k\not\equiv n \pmod{2n}\quad ~\text{ for all } 1\leq j,k\leq n. \label{5.3}
\end{align}
For fixed $0\leq r\leq n$, by Lemmas  \ref{Lemma 2.2}, \ref{Lemma 2.3}, \ref{Lemma 2.5}, and Remark \ref{remark 2.3}, for any integer $n\ge 6$ there are at least $7\cdot 2^{n-3}-4$ and at most $2^{n}$ subsets $\mathcal{S}$ satisfying \eqref{(5.3)}, \eqref{5.2} and \eqref{5.3}. Since $r$ ranges over $n+1$ values, it follows that the number of MSTD subsets is at least $(n+1)\bigl(7\cdot 2^{n-3}-4\bigr)$ and at most $2^{n}(n+1)$.

\noindent\textbf{Claim 2:}$$\mathrm{D}_{\mathrm{Dic}_{4n}}(2n, A) \geq 2(n+1) \quad \text{if } n \text{ is even},$$
$$\mathrm{D}_{\mathrm{Dic}_{4n}}(2n, A) = 0 \quad \text{if } n \text{ is odd}.$$
It is clear that if $a^{n}\in Sb-Sb=\{a^{i_j-i_k}:1\leq j,k\leq n\}$, from Lemma \ref{Lemma 2.4} we get
$$|R+Sb|=|(R+Sb)\cup(Sb+R)|=2n \text{ and } |Sb-R|=|(R-Sb)\cup (Sb-R)|=2n.$$ Since $R+R=\mathcal{R}_{2r}\setminus\{a^{2r+2n-1}\}, R-R=\mathcal{R}_0\setminus\{a^{n}\}$, to prove our claim, we need to show that
$$ a^{2r+2n-1}\notin Sb+Sb=\{a^{i_j-i_k+n}:1\leq j,k\leq n\}$$ and $$a^{n}\in Sb-Sb=\{a^{i_j-i_k}:1\leq j,k\leq n\}.$$

This is equivalent to showing that for fixed $0\leq r\leq n$,  the elements in $\mathcal{S}=\{i_1,i_2,\ldots,i_n\}\subset  \mathbb{Z}/2n\mathbb{Z}$ satisfy
\begin{align}
i_j-i_k\not\equiv 2r+n-1\pmod{2n}~\text{ for all } 1\leq j,k\leq n, \label{c31}\\
i_j-i_k\equiv n \pmod{2n}\quad ~\text{ for some } 1\leq j,k\leq n, \label{c32}
\end{align}
 Now consider the following cases. \begin{enumerate}
    \item  If $n$ is even, we take the sets $\{0,2,4,\ldots,2n\}$ and $\{1,3,\ldots,2n-1\}$. Observe that (\ref{c32}) holds. Moreover, for any $i_j,i_k$ either of these sets, we have $i_j-i_k-2r=2\big(\frac{i_j-i_k}{2}-r\big),$ and hence $i_j-i_k-2r\not\equiv n-1 \pmod{2n}$. 
    Therefore, these choices of $\mathcal{S}$ satisfy both \eqref{c31} and \eqref{c32}. Therefore, as $r$ ranges over $n+1$ values, it follows that $ \mathrm{D}_{\mathrm{Dic}_{4n}}(2n,A)\geq 2(n+1)$. 
     \item If $n$ is odd, then  Lemma \ref{Lemma 2.3}  implies that no subset $\mathcal{S}$  satisfies \eqref{c31}. Hence, MDTS sets do not exist for odd $n$, that is, $ \mathrm{D}_{\mathrm{Dic}_{4n}}(2n,A)=0$.\end{enumerate}
    
\noindent\textbf{Claim 3:} $$\mathrm{B}_{\mathrm{Dic}_{4n}}(2n,A)\geq(n+1)\sum_{\substack{i=3\\i\text{~odd}}}^{n+3}\binom{2n-i}{n-3} \quad \quad \text{if } n \text{ is even},$$ $$\mathrm{B}_{\mathrm{Dic}_{4n}}(2n,A)\geq \binom{2n}{n}(n+1)-2^{n}(n+1) \quad \text{if } n \text{ is odd}.$$
For fixed $0\leq r\leq n$,  consider a subset $\mathcal{S}=\{i_1,i_2,\ldots,i_n\}\subset  \mathbb{Z}/2n\mathbb{Z}$ for which
\begin{align}
i_j-i_k\equiv 2r+n-1\pmod{2n}~\text{ for some } 1\leq j,k\leq n, \label{c11}\\
i_j-i_k\equiv n \pmod{2n}\quad ~\text{ for some } 1\leq j,k\leq n. \label{c22}
\end{align}
This implies $$(R+R)\cup(Sb+Sb)=\mathcal{R}_{2r} \text{ and } (R-R)\cup (Sb-Sb)= \mathcal{R}_0$$ and therefore  $$|(R+R)\cup(Sb+Sb)|=2n \text{ and } |(R-R)\cup (Sb-Sb)|=2n.$$  
From Lemma \ref{Lemma 2.4} we have $|R+Sb|=|(R+Sb)\cup(Sb+R)|=2n \text{ and } |Sb-R|=|(R-Sb)\cup (Sb-R)|=2n$, which implies that the corresponding set is balanced. 

The following families of 
$n$-subsets satisfy (\ref{c11}), (\ref{c22}) and therefore contribute to the counting lower bound for even $n$.
 \begin{enumerate}
    \item The subsets of the form  $\{0,n+1-2r \pmod{2n}, n,i_4,i_5,\ldots, i_n\}$, where $\{i_4,\ldots, i_n\}$  are chosen from  the  remaining $2n-3$ elements, contribute  $\binom{2n-3}{n-3}$ sets.

\item The subsets of the form  $\{1,2r+n \pmod{2n},n+1 ,i_4, i_5,\ldots, i_n\}$, where $\{0,n\}\not\in\{i_4,\ldots, i_n\}$ are chosen from  the  remaining $2n-5$ elements, contribute  $\binom{2n-5}{n-3}$ sets.

\item The subsets of the form $\mathcal{S}=\{2,n+3-2r \pmod{2n},n+2 ,i_4, i_5,\ldots, i_n\}$, where $\{0,n,1,n+1\}\not\in \{i_4,\ldots, i_n\}$ are chosen from  the  remaining $2n-7$ elements, contribute $\binom{2n-7}{n-3}$ sets .
$$\vdots\quad \quad \quad \vdots\quad \quad \quad \vdots$$
\item[(n/2)] The subsets of the form $\mathcal{S}=\{n/2,2r+3n/2-1 \pmod{2n},3n/2 ,i_4, i_5,\ldots, i_n\}$, where $\{0,n,1,n+1,2,n+2,\ldots,n/2-1,3n/2-1\}\not\in\{i_4,\ldots, i_n\}$ are chosen from  the  remaining $n-3$ elements, contribute $\binom{n-3}{n-3}$ sets.
\end{enumerate}
Therefore, as $r$ ranges over $n+1$ values, we obtain
$$\mathrm{B_{\mathrm{Dic_{4n}}}(2n,A)}\geq (n+1) \sum_{\substack{i=3\\i\text{~odd}}}^{n+3}\binom{2n-i}{n-3}.$$
For odd $n$, as $r$ ranges over $n+1$ values and since there are at most $2^n(n+1)$ MSTD sets, $0$ MDTS sets, we obtain 
$$\mathrm{B}_{\mathrm{Dic}_{4n}}(2n,A)\geq \binom{2n}{n}(n+1)-2^{n}(n+1).$$

This completes the proof.
\end{proof}
\begin{proof}[Proof of Theorem \ref{Theorem 1.9}]
The proof follows immediately from Theorem \ref{Theorem1.7}.
\end{proof}

\section{Concluding remarks and future research}\label{section-6}
In this paper, we count the exact numbers of MSTD, MDTS, and balanced subsets of size two. For odd values of $n$, we also obtain exact counts of these sets for subsets of size three, where the results depend on whether 
$n$ is divisible by $3$. Moreover, using the same techniques, one can also derive exact formulas for subsets of size three when $n$ is even. A natural direction for future work is to investigate subsets of size four and larger. However, the method used in this paper involves many detailed case analyses and quickly becomes difficult to handle for larger sets. Finding new and simpler approaches may help to understand these cases more effectively.\par
\noindent The following problems and conjectures naturally arise from our study. The results of Theorems  \ref{Theorem1.0} and  \ref{Theorem1.1} imply that, for $m=2,3$, $\gcd\big\{ \mathrm{S}_{\mathrm{Dic}_{4n}}(m): n\geq3 \text{~~is odd}  \big\}$ is divisible by $4$. This observation motivates the following question.
\begin{question}
For any fixed integer $m$ with $2 \leq m \leq 2n$, is $4$ a divisor of $\gcd\big\{ \mathrm{S}_{\mathrm{Dic}_{4n}}(m): n\geq3  \big\}$? 
\end{question}
Furthermore, Theorems \ref{Theorem1.0} and  \ref{Theorem1.1} show  that for any fixed odd integer $n\geq3$, $4n$ is a divisor of $\gcd\big\{ \mathrm{S}_{\mathrm{Dic}_{4n}}(m): m=2,3  \big\}$. This observation naturally leads to the following question.
\begin{question}
For any fixed integer $n\geq3$ , is $4n$ a divisor of $\gcd\big\{ \mathrm{S}_{\mathrm{Dic}_{4n}}(m): m\geq2  \big\}$?
\end{question}
In addition, Theorems \ref{Theorem1.0} and \ref{Theorem1.1}, imply that $\mathrm{S}_{\mathrm{Dic}_{4n}}(m),\mathrm{D}_{\mathrm{Dic}_{4n}}(m) \text{ and }\mathrm{B}_{\mathrm{Dic}_{4n}}(m)$ are even integers for any odd integer $n\geq 3$ and $m=2,3$. This observation suggests the following more general question.
\begin{question}
Is it true that $\mathrm{S}_{\mathrm{Dic}_{4n}}(m),\mathrm{D}_{\mathrm{Dic}_{4n}}(m)  \text{ and }\mathrm{B}_{\mathrm{Dic}_{4n}}(m)$ are even integers for any positive integers $n,m$ with $n\geq 3$ and $2\leq m\leq 2n$?
\end{question}
In light of the results established in this paper, we are led to propose the following two conjectures.\begin{conjecture}
For a given integer $m\geq2$, there exists a positive integer $K_m$ such that if $n\geq K_m$, then $\mathrm{Dic}_{4n}$ has more MSTD sets than MDTS sets of size $m$.
\end{conjecture}
\begin{conjecture}
For given  integer $m\geq2$, there exists a positive integer $K_m$ such that if $n\geq K_m$, then $ \mathrm{S}_{\mathrm{Dic}_{4n}}(m)> 2^{m/2}$.
\end{conjecture}

\section{Data Availability} 	
The authors confirm that their manuscript has no associated data.

\section{Competing Interests}
The authors confirm that they have no competing interest. 

\section*{Acknowledgment}
This work was initiated during the AIS in Advanced Combinatorics (2025) held at Ahmedabad University, India. The authors thank the organizers. The first author thanks the University Grants Commission (UGC), Government of India, for financial support through the award of the Junior Research Fellowship (JRF), Ref. No. 241620111598.
\bibliographystyle{amsplain}

\end{document}